\theoremstyle{plain}
\newtheorem{lem}{Lemma}[section]
\newtheorem{cor}[lem]{Corollary}
\newtheorem{prop}[lem]{Proposition}
\newtheorem{thm}[lem]{Theorem}
\theoremstyle{definition}
\newtheorem{chunk}[lem]{\hspace*{-1.065ex}\bf}
\newtheorem{defn}[lem]{Definition}
\newtheorem{ex}[lem]{Example}
\newtheorem{que}[lem]{Question}
\newtheorem{disc}[lem]{Remark}
\newtheorem{rmk}[lem]{Remark}
\newtheorem{subprops}{}[lem]
\newcommand{\cat}[1]{\mathcal{#1}}
\newcommand{\catd}{\cat{D}}
\newcommand{\cata}{\cat{A}}
\newcommand{\catb}{\cat{B}}
\newcommand{\catac}{\cat{A}_C}
\newcommand{\catab}{\cat{A}_B}
\newcommand{\catbc}{\cat{B}_C}
\newcommand{\catacd}{\cat{A}_{\da{C}}}
\newcommand{\pd}{\operatorname{pd}}	
\newcommand{\gdim}{\mathrm{G}\text{-}\!\dim}	
\newcommand{\gkdim}[1]{\mathrm{G}_{#1}\text{-}\!\dim}	
\newcommand{\gcdim}{\gkdim{C}}
\newcommand{\fd}{\operatorname{fd}}
\newcommand{\depth}{\operatorname{depth}}
\newcommand{\len}{\operatorname{len}}
\newcommand{\HH}{\operatorname{H}}
\newcommand{\Hom}{\operatorname{Hom}}	
\newcommand{\coker}{\operatorname{Coker}}
\newcommand{\da}[1]{#1^{\dagger}}
\newcommand{\ideal}[1]{\mathfrak{#1}}
\newcommand{\m}{\ideal{m}}
\newcommand{\n}{\ideal{n}}
\newcommand{\fm}{\ideal{m}}
\newcommand{\fn}{\ideal{n}}
\newcommand{\comp}[1]{\widehat{#1}}
\newcommand{\bbz}{\mathbb{Z}}
\newcommand{\xra}{\xrightarrow}
\newcommand{\vf}{\varphi}
\newcommand{\x}{\mathbf{x}}
\renewcommand{\geq}{\geqslant}
\renewcommand{\leq}{\leqslant}
\newcommand{\Ext}[4][R]{\operatorname{Ext}_{#1}^{#2}(#3,#4)}	
\newcommand{\Rhom}[3][R]{\mathbf{R}\!\operatorname{Hom}_{#1}(#2,#3)}	
\newcommand{\Lotimes}[3][R]{#2\otimes^{\mathbf{L}}_{#1}#3}
\newcommand{\Otimes}[3][R]{#2\otimes_{#1}#3}
\renewcommand{\Hom}[3][R]{\operatorname{Hom}_{#1}(#2,#3)}	
\newcommand{\Tor}[4][R]{\operatorname{Tor}^{#1}_{#2}(#3,#4)}
\newcommand{\ssm}{\smallsetminus}
\newcommand{\Hod}{\operatorname{H-dim}}
\newcommand{\catdb}{\cat{D}_{\text{b}}}
\newcommand{\catdfb}{\cat{D}^{\text{f}}_{\text{b}}}
\newcommand{\catdf}{\cat{D}^{\text{f}}}
\numberwithin{equation}{lem}
\begin{document}

\author{Olgur Celikbas}
\address{University of Connecticut, Department of Mathematics, Storrs, CT 06269--3009 USA} 
\email{olgur.celikbas@uconn.edu}

\author{Sean Sather-Wagstaff}

\address{NDSU Department of Mathematics \# 2750,
PO Box 6050,
Fargo, ND 58108-6050
USA}

\curraddr{Department of Mathematical Sciences,
Clemson University,
O-110 Martin Hall, Box 340975, Clemson, S.C. 29634
USA}

\email{ssather@clemson.edu}

\urladdr{http://people.clemson.edu/\~{}ssather/}

\thanks{Sean Sather-Wagstaff was supported in part by a grant from the NSA}

\title[Testing for the Gorenstein property]
{Testing for the Gorenstein property}

\date{\today}

\keywords{Integrally closed ideals, G-dimension, projective dimension, semidualizing complexes, test complexes}
\subjclass[2010]{13B22, 13D02,13D07, 13D09, 13H10}

\begin{abstract}
We answer a  question of Celikbas, Dao, and Takahashi by establishing the following characterization of Gorenstein rings: a commutative noetherian local ring $(R,\mathfrak m)$ is Gorenstein if and only if it admits an integrally closed $\mathfrak m$-primary ideal of finite Gorenstein dimension. This is accomplished through a detailed study of certain test complexes. Along the way we construct such a test complex that detect finiteness of Gorenstein dimension, but not that of projective dimension.
\end{abstract}

\maketitle

\section{Introduction}

Throughout this paper $R$ denotes a commutative noetherian local ring with unique maximal ideal $\fm$ and residue field $k$.

\

A celebrated theorem of Auslander, Buchsbaum, and Serre~\cite{auslander:hdlr,serre:sldhdaedmn} tells us that  $R$ is regular if and only if $k$ has finite projective dimension. 
Burch \cite[p. 947, Corollary 3]{B} extended this by proving that $R$ is regular if and only if $\pd(R/I)<\infty$ for some integrally closed $\fm$-primary ideal $I$ of $R$. 

Auslander and Bridger~\cite{auslander:adgeteac,AuBr} introduced the G-dimension as a generalization of projective dimension. 
(See~\ref{ch150423a} for the definition.)
Analogous to the regular setting, the finiteness of $\gdim_R(k)$ characterizes the Gorensteinness of $R$.
In our local setting, Goto and Hayasaka \cite{GH} studied Gorenstein dimension of integrally closed $\fm$-primary ideals and,  analogous to Burch's result, established the following; see the question of K. Yoshida stated in the discussion following \cite[(1.1)]{GH}.

\begin{chunk}[\protect{\cite[(1.1)]{GH}}] \label{GoHa} Let  $I$ be an integrally closed $\fm$-primary ideal of $R$. Assume $I$ contains a non-zerodivisor of $R$, or $R$ satisfies Serre's condition $(S_{1})$. Then $R$ is Gorenstein if and only if $\gdim_R(R/I)<\infty$. 
\end{chunk}

Our aim in this paper is to remove the hypothesis ``$I$ contains a non-zerodivisor of $R$, or $R$ satisfies Serre's condition $(S_{1})$" from~\ref{GoHa}. We accomplish this in the following result and hence obtain a complete generalization of Burch's aforementioned result; see also Corollary \ref{cor2z} for a further generalization. 

\

\noindent
\textbf{Theorem~\ref{cor2}.}
\emph{Let $I\subseteq R$ be an integrally closed ideal with $\depth(R/I)=0$, e.g., such that $I$ is $\fm$-primary. Then $R$ is Gorenstein if and only if $\gdim_R(R/I)<\infty$.}

\

Our argument 
is quite different from that of Goto and Hayasaka \cite{GH}
since it uses $\gdim$-test complexes.
For this part of the introduction, we focus on the case of $\Hod$-test modules, defined next.
Note that the  $\pd$-test modules
(i.e., the case where $\Hod=\pd$) 
are from~\cite{CDtest}.

\begin{defn} \label{moduletest} 
Let $\Hod$ denote either projective dimension $\pd$ or G-dimension $\gdim$.
Let $M$ be a finitely generated $R$-module.
Then $M$ is an \emph{$\Hod$-test module} over $R$ if the following condition holds for all finitely generated $R$-modules $N$:
If $\Tor iMN=0$ for all $i\gg 0$, then $\Hod_R(N)<\infty$.  
\end{defn}

It is straightforward to show that if $M$ is a $\pd$-test $R$-module, then it is also $\gdim$-test.
Example~\ref{ex140816a} shows that the converse of this statement fails in general.

As part of our proof of Theorem~\ref{cor2}, we also answer the following questions;
see
Corollaries~\ref{cor140825b}\eqref{cor140825b2} and~\ref{cor140825a}.

\begin{que}\label{q1} 
Let $M$ be a  $\pd$-test module over $R$. 
\begin{enumerate}[(a)]
\item\label{item140831b}
Must $\widehat{M}$ be a $\pd$-test module over $\widehat{R}$?
\item\label{item140831a}
(\cite[(3.5)]{CDtest}) If  $\gdim_R(M)<\infty$, must $R$ be Gorenstein? 
\end{enumerate}
\end{que}

Affirmative answers to Question~\ref{q1}\eqref{item140831a} under additional hypotheses are in~\cite[(1.3)]{CDtest} and~\cite[(2.15)]{Testrigid}.
Also, Majadas~\cite{JM} gives an affirmative answer 
to a version of Question~\ref{q1}\eqref{item140831b}
that uses a  more restrictive version of test modules.

Theorem~\ref{cor2} follows from the next, significantly stronger result.

\

\noindent
\textbf{Theorem~\ref{mainobscor}.} 
\emph{Let $M$ be a  $\gdim$-test $R$-module such that $\Ext iMR=0$ for $i\gg 0$. Then $R$ is Gorenstein.}

\

In turn, this follows from the much more general Theorem~\ref{mainobs} and Corollary~\ref{mainobsx}, which are results for detecting dualizing complexes. 

We conclude this introduction by summarizing the contents of this paper.
Section~\ref{sec140831} consists of background material for use throughout the paper, and contains some technical lemmas for later use.
In Section~\ref{sec150429a}, we develop foundational properties of various $\Hod$-test objects, and answer Question~\ref{q1}.
And Section~\ref{sec150429b} contains the theorems highlighted above.

\section{Derived Categories and Semidualizing Complexes}
\label{sec140831}

\begin{chunk}
Throughout this paper we work in the derived category $\catd(R)$ whose objects are the chain complexes of $R$-modules, i.e., the
$R$-complexes 
$X$ with homological differential $\partial^X_i\colon X_i\to X_{i-1}$.
References for this include~\cite{Larsbook2,Hartshorne, verdier:cd, verdier:1}. Our notation is consistent with~\cite{LarsSD}. 
In particular, $\Rhom XY$ and $\Lotimes XY$ are the derived Hom-complex and derived tensor product of two $R$-complexes $X$ and $Y$.
Isomorphisms in $\catd(R)$ are identified by the symbol $\simeq$.
The projective dimension and flat dimension of an $R$-complex $X\in\catdb(R)$ are denoted $\pd_R(X)$ and $\fd_R(X)$.

The subcategory of $\catd(R)$ consisting of homologically bounded $R$-complexes
(i.e., complexes $X$ such that $\HH_i(X)=0$ for $|i|\gg 0$) is $\catdb(R)$.
The subcategory of $\catd(R)$ consisting of homologically finite $R$-complexes
(i.e., complexes $X$ such that $\HH(X):=\oplus_{i\in\bbz}\HH_i(X)$ is finitely generated) is denoted $\catdfb(R)$.
\end{chunk}

\begin{chunk}\label{ch150423c}
A homologically finite $R$-complex $C$ is \emph{semidualzing} if the natural morphism $R\to\Rhom CC$ in $\catd(R)$ is an isomorphism.
For example, an $R$-module is semidualizing if and only if $\Hom CC\cong R$ and $\Ext iCC=0$ for $i\geq 1$.
In particular, $R$ is a semidualizing $R$-module.
A \emph{dualizing $R$-complex} is a semidualizing $R$-complex of finite injective dimension.

\begin{subprops}\label{ch150423c1}
If $R$ is a homomorphic image of a local Gorenstein ring $Q$, then $R$ has a dualizing complex, by~\cite[V.10]{Hartshorne}.
(The converse holds by work of Kawasaki~\cite{Kawasaki}.)
In particular, the Cohen Structure Theorem shows that the completion $\comp R$ has a dualizing complex.
When $R$ has a dualizing complex $D$, and $C$ is a semidualizing $R$-complex, the dual $\Rhom CD$ is also semidualizing over $R$, by~\cite[(2.12)]{LarsSD}.
\end{subprops}

\begin{subprops}\label{ch150423c2}
Let $\vf\colon R\to S$ be a flat local ring homomorphism,
and let $C$ be a semidualizing $R$-complex.
Then the $S$-complex $\Lotimes SC$ is semidualizing, by~\cite[(5.6)]{LarsSD}.
If  the closed fibre $S/\m S$ is Gorenstein and $R$ has a dualizing complex $D^R$, then
$D^S:=\Lotimes S{D^R}$ is dualizing for $S$ by~\cite[(5.1)]{avramov:lgh}.
\end{subprops}
\end{chunk}

Dualizing complexes were introduced by Grothendieck and Harshorne~\cite{Hartshorne}.
The more general semidualizing complexes originated in special cases, e.g., in~\cite{avramov:rhafgd, foxby:gmarm, Golod, vasconcelos:dtmc},
with the general version premiering in~\cite{LarsSD}.
The notion of G-dimension, summarized next, started with the work of Auslander and Bridger~\cite{auslander:adgeteac,AuBr} for modules.
Foxby and Yassemi~\cite{yassemi:gd} recognized the connection with derived reflexivity,
with the general situation given in~\cite{LarsSD}.

\begin{chunk}\label{ch150423a}
Let $C$ be a semidualizing $R$-complex and $X\in\catdfb(R)$. Write $\gcdim_R(X)<\infty$ when
$X$ is ``derived $C$-reflexive'', i.e., when
$\Rhom XC\in\catdb(R)$ and the natural morphism $X\to\Rhom{\Rhom XC}C$ in $\catd(R)$ is an isomorphism.
In the case $C=R$, we write $\gdim_R(X)<\infty$ instead of $\gkdim R_R(X)<\infty$.

\begin{subprops}\label{ch150423a1}
The complex $C$ is dualizing if and only if every $R$-complex in $\catdfb(R)$ is derived $C$-reflexive, by~\cite[(8.4)]{LarsSD}.
In particular, $R$ is Gorenstein if and only if every $R$-complex $X\in\catdfb(R)$ has $\gdim_R(X)<\infty$.
\end{subprops}

\begin{subprops}\label{ch150423a3}
Let $R\to S$ be a flat local ring homormorphism. 
Given an $R$-complex $X\in\catdfb(R)$, one has
$\gcdim_R(X)<\infty$ if and only if $\gkdim{\Lotimes SC}_S(\Lotimes SX)<\infty$, by~\cite[(5.10)]{LarsSD}; see~\ref{ch150423c2}.
\end{subprops}
\end{chunk}

Auslander and Bass classes, defined next, arrived in special cases in~\cite{avramov:rhafgd,foxby:gmarm},
again with the general case described in~\cite{LarsSD}.

\begin{chunk}\label{ch150423b}
Let $C$ be a semidualizing $R$-complex.
The \emph{Auslander class}  $\catac(R)$ consists of the $R$-complexes $X\in\catdb(R)$
such that $\Lotimes CX\in\catdb(R)$ and the natural  morphism $\gamma^C_X\colon X\to\Rhom C{\Lotimes CX}$ in $\catd(R)$ is an isomorphism.
The \emph{Bass class} $\catbc(R)$ consists of all the $R$-complexes $X\in\catdb(R)$
such that $\Rhom CX\in\catdb(R)$ and such that the natural  morphism $\xi^C_X\colon\Lotimes C{\Rhom CX}\to X$ in $\catd(R)$ is an isomorphism.

\begin{subprops}\label{ch150423b1}
When $R$ has a dualizing complex $D$, given an $R$-complex $X\in\catdfb(R)$, one has
$\gcdim_R(X)<\infty$ if and only if $X\in\cata_{\Rhom CD}(R)$, by~\cite[(4.7)]{LarsSD}; this uses~\ref{ch150423c1} and~\ref{ch150423a1}, which imply that
$\Rhom CD$ is semidualizing and $C\simeq\Rhom{\Rhom CD}D$.
\end{subprops}

\begin{subprops}\label{ch150423b2}
Let $R\to S$ be a flat local ring homormorphism. 
Given an $S$-complex $X$, one has
$X\in\catac(R)$ if and only if $X\in\cata_{\Lotimes SC}(S)$, by~\cite[(5.3.a)]{LarsSD}.
\end{subprops}
\end{chunk}

The following two lemmas are proved like~\cite[(4.4)]{frankild:rrhffd} and~\cite[(7.3)]{iyengar:golh}, respectively.

\begin{lem}\label{fact140901a}
Let $X,P\in\catdfb(R)$ such that $P\not\simeq 0$ and $\pd_R(P)<\infty$.
Let $C$ be a semidualizing $R$-complex.
\begin{enumerate}[\rm(a)]
\item\label{fact140901a1}
The following conditions are equivalent:
\begin{enumerate}[\rm(i)]
\item\label{fact140901a1a}
$X\in\cata_C(R)$, 
\item\label{fact140901a1b}
$\Lotimes PX\in\catac(R)$, and
\item\label{fact140901a1c}
$\Rhom PX\in\catac(R)$.
\end{enumerate}
\item\label{fact140901a2}
The following conditions are equivalent:
\begin{enumerate}[\rm(i)]
\item\label{fact140901a2a}
$X\in\catb_C(R)$, 
\item\label{fact140901a2b}
$\Lotimes PX\in\catbc(R)$, and
\item\label{fact140901a2c}
$\Rhom PX\in\catbc(R)$.
\end{enumerate}
\end{enumerate}
\end{lem}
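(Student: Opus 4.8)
The plan is to reduce both parts of the lemma to the following single claim: \emph{if $Q$ is a perfect $R$-complex---that is, $Q\in\catdfb(R)$ with $\pd_R Q<\infty$---and $Q\not\simeq 0$, then for every $X\in\catdfb(R)$ one has $X\in\catac(R)$ if and only if $\Lotimes QX\in\catac(R)$, and likewise with $\catbc(R)$ in place of $\catac(R)$.} Granting this, part~(a) follows once we know $X\in\catac(R)\Leftrightarrow\Lotimes PX\in\catac(R)$ and $X\in\catac(R)\Leftrightarrow\Rhom PX\in\catac(R)$: the first is the claim with $Q=P$, and the second is the claim with $Q=\Rhom{P}{R}$ combined with the natural isomorphism $\Rhom PX\simeq\Lotimes{\Rhom{P}{R}}{X}$ (valid since $P$ is perfect), where $\Rhom{P}{R}$ is again perfect and is nonzero because $\Rhom{\Rhom{P}{R}}{R}\simeq P\not\simeq 0$. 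Part~(b) is handled identically.

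I will use two standard facts about a nonzero perfect complex $Q$. First, $\Lotimes Q{-}$ preserves and reflects homological boundedness on $\catd(R)$ and detects isomorphisms: a morphism $\alpha$ with homologically bounded source and target is invertible in $\catd(R)$ if and only if $\Lotimes Q\alpha$ is, its cone being $\Lotimes Q{\cone\alpha}$, and $\Lotimes Q{Y}\simeq 0$ forces $Y\simeq 0$ for $Y\in\catdb(R)$. Second, for every $R$-complex $Y$ there are natural isomorphisms $\Lotimes C{(\Lotimes QY)}\simeq\Lotimes Q{(\Lotimes CY)}$ and $\Rhom C{(\Lotimes QY)}\simeq\Lotimes Q{(\Rhom CY)}$; the former is commutativity and associativity of $\lotimes$, and the latter comes from $\Lotimes QY\simeq\Rhom{\Rhom{Q}{R}}{Y}$, Hom--tensor adjunction $\Rhom{C}{(\Rhom{\Rhom{Q}{R}}{Y})}\simeq\Rhom{\Lotimes{C}{\Rhom{Q}{R}}}{Y}\simeq\Rhom{\Rhom{Q}{R}}{(\Rhom{C}{Y})}$, and $\Rhom{\Rhom{Q}{R}}{-}\simeq\Lotimes Q{-}$. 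Note that this avoids any appeal to tensor-evaluation morphisms involving the semidualizing complex $C$.

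Now fix a nonzero perfect complex $Q$ and $X\in\catdfb(R)$; then automatically $\Lotimes QX\in\catdfb(R)$, so membership of $\Lotimes QX$ in $\catac(R)$ amounts to homological boundedness of $\Lotimes C{(\Lotimes QX)}$ and invertibility of $\gamma^C_{\Lotimes QX}$. By the first isomorphism, $\Lotimes C{(\Lotimes QX)}\simeq\Lotimes Q{(\Lotimes CX)}$ is homologically bounded exactly when $\Lotimes CX$ is. Applying $\Rhom C{-}$ and both isomorphisms yields a canonical identification $\Rhom C{(\Lotimes C{(\Lotimes QX)})}\simeq\Lotimes Q{(\Rhom C{(\Lotimes CX)})}$ under which $\gamma^C_{\Lotimes QX}$ corresponds to $\Lotimes Q{\gamma^C_X}$, which is invertible if and only if $\gamma^C_X$ is. Hence $\Lotimes QX\in\catac(R)$ if and only if $X\in\catac(R)$. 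For the Bass class one argues identically: $\Rhom C{(\Lotimes QX)}\simeq\Lotimes Q{(\Rhom CX)}$ is homologically bounded exactly when $\Rhom CX$ is, and under $\Lotimes C{(\Rhom C{(\Lotimes QX)})}\simeq\Lotimes Q{(\Lotimes C{(\Rhom CX)})}$ the morphism $\xi^C_{\Lotimes QX}$ corresponds to $\Lotimes Q{\xi^C_X}$, invertible if and only if $\xi^C_X$ is; so $\Lotimes QX\in\catbc(R)$ if and only if $X\in\catbc(R)$.

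The one place that requires genuine care is the compatibility claim in the previous paragraph---that $\gamma^C_{\Lotimes QX}$ and $\xi^C_{\Lotimes QX}$ correspond to $\Lotimes Q{\gamma^C_X}$ and $\Lotimes Q{\xi^C_X}$ under the canonical identifications built above. Establishing this is a diagram chase that traces the definitions of the natural morphisms $\gamma^C$ and $\xi^C$ from~\ref{ch150423b} through associativity of $\lotimes$, Hom--tensor adjunction, and the biduality isomorphism $Q\simeq\Rhom{\Rhom{Q}{R}}{R}$; it is routine but must be carried out carefully, keeping track of the direction of every canonical map, exactly as in the proofs of~\cite[(4.4)]{frankild:rrhffd} and~\cite[(7.3)]{iyengar:golh} on which this lemma is patterned.
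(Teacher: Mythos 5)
Your overall architecture is the right one and is essentially the argument behind the sources the paper cites for this lemma (\cite[(4.4)]{frankild:rrhffd} and \cite[(7.3)]{iyengar:golh}): reduce the $\Rhom PX$ statements to the $\Lotimes PX$ statements via $\Rhom PX\simeq\Lotimes{\Rhom PR}{X}$, and then show that tensoring with a nonzero perfect complex preserves and reflects membership in $\catac(R)$ and $\catbc(R)$ by commuting $\Lotimes Q{-}$ past $\Lotimes C{-}$ and $\Rhom C{-}$ and checking naturality of $\gamma^C$ and $\xi^C$; the deferred diagram chase is genuinely the routine part. However, your first ``standard fact'' is where the real content lives, and as stated it is false: for $Q$ the Koszul complex on a non-nilpotent element $x$ and $Y=R_x$ (a perfectly good object of $\catdb(R)$) one has $\Lotimes QY\simeq 0$ while $Y\not\simeq 0$, so $\Lotimes Q{-}$ neither detects acyclicity nor reflects boundedness on all of $\catdb(R)$. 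You must restrict to complexes whose homology is degreewise finitely generated and bounded on one side; there detection of non-acyclicity does follow elementarily from $\Lotimes{(\Lotimes QY)}{k}\simeq\Lotimes[k]{(\Lotimes Qk)}{(\Lotimes Yk)}$, Nakayama at the extreme nonvanishing degree of $Y$, minimality of a free model of $Q$, and K\"unneth over $k$. Fortunately every cone you need to kill (e.g.\ $\cone(\gamma^C_X)$, once $\Lotimes CX\in\catdb(R)$ is known) is bounded above with finitely generated homology, so this part of your argument survives after the restriction.

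The more serious gloss is the reflection of homological boundedness: to pass from $\Lotimes C{(\Lotimes PX)}\simeq\Lotimes P{(\Lotimes CX)}\in\catdb(R)$ back to $\Lotimes CX\in\catdb(R)$, you are asserting that if $V$ is bounded below with degreewise finitely generated homology and $\Lotimes PV$ is homologically bounded, then $V$ is homologically bounded. This is not an elementary property of perfect complexes: it is Iversen's amplitude inequality $\amp(\Lotimes PV)\geq\amp(V)$, in the form for unbounded complexes due to Foxby and Iyengar, whose proof rests on the New Intersection Theorem. (The Nakayama--K\"unneth computation that handles acyclicity detection does not give it, since $\Lotimes Vk$ can be unbounded above even for bounded $V$.) Your proof is correct once this input is cited explicitly, but presenting it as a routine preservation/reflection statement hides the one genuinely deep step of the lemma.
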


\begin{lem}\label{lem150421a}
Let $R\to S$ be a flat local ring homomorphism
such that $S/\m S$ is Gorenstein.
Let $X\in\catdfb(S)$ such that each homology module $\HH_i(X)$ is finitely generated over $R$.
\begin{enumerate}[\rm(a)]
\item \label{lem150421a1}
One has $\gcdim_R(X)<\infty$ if and only if $\gkdim{\Lotimes SC}_S(X)<\infty$.
\item \label{lem150421a2}
One has $\gdim_R(X)<\infty$ if and only if $\gdim_S(X)<\infty$.
\end{enumerate}
\end{lem}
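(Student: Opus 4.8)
Part (b) is the special case $C=R$ of part (a): one has $\Lotimes SR\simeq S$, so for $C=R$ the quantities $\gcdim_R(X)$ and $\gkdim{\Lotimes SC}_S(X)$ are just $\gdim_R(X)$ and $\gdim_S(X)$. So it suffices to prove (a), in which $C$ is a semidualizing $R$-complex. The plan is to reduce to the case where $R$ — and hence, by~\ref{ch150423c2}, also $S$ — has a dualizing complex, and to handle that case with Auslander classes. Observe at the start that, since $\HH(X)$ is a finitely generated $R$-module, $X$ lies in $\catdfb(R)$ when viewed as an $R$-complex along $R\to S$, so $\gcdim_R(X)$ is defined.

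First I would treat the case where $R$ has a dualizing complex $D$. Then $\Lotimes SD$ is a dualizing complex for $S$, by~\ref{ch150423c2} — this is where the Gorenstein hypothesis on $S/\m S$ is used — and $\da C:=\Rhom CD$ is a semidualizing $R$-complex, by~\ref{ch150423c1}. Now~\ref{ch150423b1} over $R$ turns the condition $\gcdim_R(X)<\infty$ into $X\in\cata_{\da C}(R)$; and~\ref{ch150423b2}, applied with the semidualizing complex $\da C$ and with $X$ regarded as an $S$-complex, turns this into $X\in\cata_{\Lotimes S{\da C}}(S)$. Flat base change for $\rhom$ — legitimate because $C$ is homologically finite — identifies $\Lotimes S{\da C}$ with $\Rhom[S]{\Lotimes SC}{\Lotimes SD}$, the dual of the semidualizing $S$-complex $\Lotimes SC$ against the dualizing complex $\Lotimes SD$, so~\ref{ch150423b1} over $S$ turns $X\in\cata_{\Lotimes S{\da C}}(S)$ into $\gkdim{\Lotimes SC}_S(X)<\infty$. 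Concatenating these equivalences settles this case.

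For the reduction, I would let $\comp R$ be the completion of $R$, which has a dualizing complex by~\ref{ch150423c1}, and let $S'$ be the $\m S$-adic completion of $S$. One then checks: $S'$ is a noetherian local ring whose maximal ideal lies over the maximal ideal of $S/\m S$; the canonical map $\comp R\to S'$ is flat and local, with closed fibre $S/\m S$, still Gorenstein; and — using that each $\HH_i(X)$ is an $S$-module finite over $R$, so that $(\m S)^n\HH_i(X)=\m^n\HH_i(X)$ for all $n$ — the comparison map $\Lotimes{\comp R}X\to\Lotimes[S]{S'}X$ is an isomorphism in $\catd(\comp R)$, both complexes being the $\m$-adic completion of $X$. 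Put $C':=\Lotimes{\comp R}C$, semidualizing over $\comp R$ by~\ref{ch150423c2}, and $X':=\Lotimes{\comp R}X$, which lies in $\catdfb(S')$ with homology finite over $\comp R$. Then~\ref{ch150423a3} applied to $R\to\comp R$ shows $\gcdim_R(X)<\infty$ if and only if $\gkdim{C'}_{\comp R}(X')<\infty$, while~\ref{ch150423a3} applied to $S\to S'$ with the semidualizing complex $\Lotimes SC$ — using $\Lotimes[S]{S'}{(\Lotimes SC)}\simeq\Lotimes[\comp R]{S'}{C'}$ and $\Lotimes[S]{S'}X\simeq X'$ — shows $\gkdim{\Lotimes SC}_S(X)<\infty$ if and only if $\gkdim{\Lotimes[\comp R]{S'}{C'}}_{S'}(X')<\infty$. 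The dualizing-complex case, applied to $\comp R\to S'$, equates these two conditions, which finishes the proof.

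The conceptual content sits in the Auslander-class chain of the dualizing-complex case; the flat base-change isomorphism for $\rhom$ and the identification $\Lotimes{\comp R}X\simeq\Lotimes[S]{S'}X$ are fussy but routine. The step I expect to take the most care is the reduction: \ref{ch150423a3} only relates $\gcdim_R(X)$ to the $C$-dimension over $S$ of the \emph{base change} $\Lotimes SX$, not of $X$ itself, so it does not prove the lemma directly; the role of passing to completions is precisely to force the $S$-side base change of $X$ to coincide with the $R$-side base change of $X$, after which the dualizing complex of the bottom ring can be brought in.
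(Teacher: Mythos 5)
Your argument is correct: the reduction to the complete case via the $\m S$-adic completion $S'$ of $S$ (with the key observation that $\Lotimes{\comp R}X\simeq\Lotimes[S]{S'}X$ because the homology of $X$ is finite over both rings), together with the Auslander-class chain $\gcdim_R(X)<\infty\Leftrightarrow X\in\cata_{\da{C}}(R)\Leftrightarrow X\in\cata_{\Lotimes S{\da{C}}}(S)\Leftrightarrow\gkdim{\Lotimes SC}_S(X)<\infty$ in the presence of a dualizing complex, all checks out. The paper offers no proof of its own, only the remark that the lemma ``is proved like'' the corresponding result of Iyengar--Sather-Wagstaff, and that proof runs on exactly this completion-plus-Auslander-class mechanism, so your route is essentially the intended one.
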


The next result, essentially from~\cite[Theorem 5(ii)]{B}, is key for  Theorem~\ref{cor2}.

\begin{lem}\label{lem150424a}
Let $I$ be an integrally closed ideal such that $\depth(R/I)=0$, and let $M$ be a finitely generated $R$-module. 
If $\Tor{i}{R/I}M=0=\Tor{i+1}{R/I}M$ for some $i\geq 1$, then $\pd_R(M)\leq i$.
In particular, $R/I$ is a $\pd$-test module over $R$.
\end{lem}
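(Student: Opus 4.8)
The plan is to reduce the statement to the classical result of Burch that underlies it, namely the fact that over a ring with an integrally closed ideal $I$ with $\depth(R/I)=0$, vanishing of two consecutive $\tor$'s forces finite projective dimension. More precisely, I would invoke \cite[Theorem~5(ii)]{B} (or its modern formulation): if $I$ is integrally closed with $\depth(R/I)=0$ and $\Tor i{R/I}M=0$ for two consecutive values of $i$, then a syzygy of $M$ is free, whence $\pd_R(M)<\infty$ with an explicit bound. The key point is that the hypothesis $\depth(R/I)=0$ — which subsumes the case where $I$ is $\fm$-primary — is exactly what is needed to run Burch's argument without assuming $I$ contains a non-zerodivisor; this is the improvement over the Goto--Hayasaka setup and is where the integral closedness of $I$ is genuinely used (via the fact that $I$ cannot be ``too small'' relative to $\fm$, so that a minimal free resolution localized against $R/I$ behaves rigidly).

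First I would recall the setup: let $F_\bullet\to M$ be a minimal free resolution, and let $\Omega^jM$ denote the $j$-th syzygy, so $\Tor i{R/I}M\cong\Tor 1{R/I}{\Omega^{i-1}M}$ for $i\geq 1$. The hypothesis $\Tor i{R/I}M=0=\Tor{i+1}{R/I}M$ thus says $\Tor 1{R/I}{\Omega^{i-1}M}=0=\Tor 1{R/I}{\Omega^iM}$, i.e., tensoring the short exact sequence $0\to\Omega^iM\to F_{i-1}\to\Omega^{i-1}M\to 0$ with $R/I$ stays exact on the left. Then I would apply Burch's result to the module $\Omega^{i-1}M$ (or directly to $M$ after shifting): the two consecutive vanishings, together with $\depth(R/I)=0$ and $I$ integrally closed, force $\Omega^{i-1}M$ — more precisely a further syzygy of it — to be free, giving $\pd_R(\Omega^{i-1}M)<\infty$ and hence $\pd_R(M)\leq i$ by counting. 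The bound $\pd_R(M)\le i$ comes out of keeping track of exactly which syzygy becomes free in Burch's proof.

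For the ``in particular'' statement, suppose $\Tor i{R/I}M=0$ for all $i\gg 0$; then in particular there exists $i\geq 1$ with $\Tor i{R/I}M=0=\Tor{i+1}{R/I}M$, so the first part gives $\pd_R(M)<\infty$. Hence $R/I$ is a $\pd$-test module over $R$ in the sense of Definition~\ref{moduletest} (with $\Hod=\pd$).

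The main obstacle is making sure that Burch's original statement, phrased for the ideal-theoretic setting, transfers cleanly: one must check that \cite[Theorem~5(ii)]{B} is stated (or can be re-derived) under the hypothesis $\depth(R/I)=0$ rather than the presence of a non-zerodivisor in $I$, and that it yields the precise numerical bound $\pd_R(M)\le i$ rather than merely finiteness. If the cited form of Burch's theorem assumes a non-zerodivisor, I would instead reproduce the short argument directly: localize/complete to reduce to the essential case, use that $\depth(R/I)=0$ means $\fm\in\ass(R/I)$ so there is a nonzero element of $R/I$ killed by $\fm$, and combine this with the integral-closedness of $I$ (via the determinant-trick characterization $x\in\overline I$ iff $x$ multiplies a faithful finitely generated module into $I$) to show that the map in a minimal resolution splits after tensoring with $R/I$, forcing a free syzygy. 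Everything else is routine homological bookkeeping with syzygies.
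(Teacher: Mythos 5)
There is a genuine gap, and it sits exactly at the point you flag as "the main obstacle." Your primary route invokes \cite[Theorem~5(ii)]{B} as if it already said: integrally closed $I$ with $\depth(R/I)=0$ plus two consecutive Tor vanishings forces $\pd_R(M)\leq i$. That is not Burch's theorem --- it is the statement of this lemma. Burch's Theorem~5(ii) is a statement about ideals satisfying $\m I\neq\m(I\colon\m)$ (the non-zerodivisor/$(S_1)$ hypotheses you mention belong to Goto--Hayasaka, not to Burch), so citing it in the form you propose is circular. The actual content of the lemma is the bridge from ``$I$ integrally closed and $\depth(R/I)=0$'' to Burch's hypothesis, and your proposal never builds that bridge. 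Your fallback sketch names the right two ingredients --- a socle element of $R/I$ coming from $\depth(R/I)=0$, and the determinantal trick coming from integral closedness --- but the assertion that these ``show that the map in a minimal resolution splits after tensoring with $R/I$'' is not an argument, and it is not how the pieces fit together.

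For comparison, the paper argues by contradiction: supposing $\pd_R(M)>i$ while $\Tor{i}{R/I}M=0=\Tor{i+1}{R/I}M$, the contrapositive of Burch's Theorem~5(ii) yields the equality $\m I=\m(I\colon\m)$ (after first disposing of the case $I=\m$ via Auslander--Buchsbaum--Serre, so that $I\colon\m\subseteq\m$). This equality is precisely what makes the determinantal trick run: for $r\in I\colon\m$ one gets $r\m\subseteq(I\colon\m)\m=I\m$, and the annihilator condition holds because $r\in\m$; hence $r$ is integral over $I$, so $r\in I$, i.e., $I\colon\m=I$. That contradicts $\depth(R/I)=0$, which supplies an element of $(I\colon\m)\ssm I$. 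Without the step extracting $\m I=\m(I\colon\m)$ from Burch's theorem, your determinant-trick computation has no way to verify $r\m\subseteq I\m$, and the argument does not close. (Your treatment of the ``in particular'' clause is fine, and the bound $\pd_R(M)\leq i$ falls out of the contradiction structure rather than from syzygy bookkeeping.)
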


\begin{proof}
Assume that $\Tor{i}{R/I}M=0=\Tor{i+1}{R/I}M$, and suppose that $\pd_R(M)> i$.
The Auslander-Buchsbaum-Serre Theorem states that $k$ is a $\pd$-test $R$-module, so we assume
without loss of generality that $I\subsetneq\m$. Hence, we have $I\colon\m\neq R$ so $I\colon\m\subseteq\m$.
From~\cite[Theorem 5(ii)]{B}, we conclude that $\m I=\m(I\colon\m)$.

Claim: $I\colon \m=I$. One containment $(\supseteq)$ is standard. For the reverse containment, let
$r\in I\colon\m\subseteq\m$. To show that $r$ is in $I$, it suffices to show that $r$ is integral over $I$,
since $I$ is integrally closed. To this end, we use the ``determinantal trick'' from~\cite[(1.1.8)]{CS}:
it suffices to show that (1) we have $r\m\subseteq I\m$, and (2) whenever $a\m=0$ for some $a\in R$, we have $ar=0$.
For (1), since $r$ is in $I\colon\m$, we have $r\m\subseteq (I\colon\m)\m=I\m$, as desired.
For (2), if $a\m=0$, then the fact that $r$ is in $\m$ implies that $ar=0$, as desired.
This completes the proof of the claim.

Now, the fact that $\depth(R/I)=0$ implies that there is an element $x\in R\ssm I$ such that $x\m\subseteq I$. 
In other words, $x\in(I\colon\m)\ssm I$, contradicting the above claim.
\end{proof}

\begin{chunk}\label{ch150424a}
Let $I$ be an integrally closed  ideal of $R$. 
If one assumes that $I$ is $\m$-primary (stronger than the assumption $\depth(R/I)=0$ from Lemma~\ref{lem150424a}) then one gets the following very strong conclusion.
Given a finitely generated $R$-module $M$, if $\Tor i{R/I}M=0$ for some $i\geq 1$, then $\pd_R(M)<i$, by~\cite[(3.3)]{CHKV}.
\end{chunk}

\section{$\Hod$-Test Complexes}
\label{sec150429a}

In this section, let $C$ be a semidualizing $R$-complex.

\

We now introduce the main object of study for this paper.

\begin{defn} \label{Gtest} 
Let $M\in\catdfb(R)$, and let $\Hod$ denote either projective dimension $\pd$ or $\text{G}_C$-dimension $\gcdim$.
Then $M$ is an \emph{$\Hod$-test complex} over $R$ if the following condition holds for all  $N\in\catdfb(R)$:
If $\Tor iMN=0$ for all $i\gg 0$, i.e., if $\Lotimes MN\in\catdb(R)$, then $\Hod_R(N)<\infty$.  
\end{defn}

\begin{chunk} \label{obs1} 
Let $M$ be an $R$-module.
A standard truncation argument shows that $M$ is a $\Hod$-test module if and only if it is a $\Hod$-test complex; see~\cite[Proof of (3.2)]{CDtest}.
\end{chunk}

\begin{chunk}\label{obs140825a}
Examples of $\pd$-test modules are given in~\ref{ch150424a} and Lemma~\ref{lem150424a}.
Note that this includes the standard example $k=R/\m$.
See also~\cite[Appendix A]{Testrigid}.

\begin{subprops}\label{obs140825a1}
Given an $R$-complex $X\in\catdfb(R)$,
if $\pd_R(X)$ is finite, then so is $\gcdim_R(X)$, by~\cite[(2.9)]{LarsSD}.
Thus, if $M$ is a  $\pd$-test complex, then it is also a  $\gcdim$-test, in particular, $M$ is a $\gdim$-test complex.
\end{subprops}

\begin{subprops}\label{obs140825a2zz}
If $R$ is G-regular (i.e., if every $R$-module of finite G-dimension has finite projective dimension), then
the $\pd$-test complexes and $\gdim$-test complexes over $R$ are the same. 
Examples of G-regular rings include regular rings and Cohen-Macaulay rings of minimal multiplicity; see~\cite{MR2473342}.
\end{subprops}

\begin{subprops}\label{obs140825a2}
Examples of $\gcdim$ test modules, e.g., of $\gdim$ test modules, that are not $\pd$ test modules are more mysterious.
See Example~\ref{ex140816a} for a non-trivial example. 
\end{subprops}

\begin{subprops}\label{obs140825a3}
Assume that $R$ has a dualizing complex $D$.
A natural candidate for a $\gcdim$-test complex is $\da C=\Rhom CD$.
Indeed, if $\gcdim_R(X)<\infty$, then $X\in\catacd(R)$ by~\ref{ch150423b1},
so by definition we have $\Lotimes{\da C}X\in\catdb(R)$.
In particular, a natural candidate for a $\gdim$-test complex is $D$; see, e.g., Corollary~\ref{cor140901a}.

However, $D$ can fail to be a $\gdim$-test complex.
Indeed, Jorgensen and \c Sega~\cite[(1.7)]{DJLS} construct an artinian local ring $R$ with a finitely generated module $L$
that satisfies $\Ext iLR=0$ for all $i\geq 1$ and $\gdim_R(L)=\infty$.
Since $R$ is local and artinian, it has a dualizing complex, namely $D=E_R(k)$ the injective hull of the residue field $k$.
We claim that $\Tor iDL=0$ for  $i\geq 1$.
(This shows that $D$ is not  $\gdim$-test over $R$.)
To this end, recall the following for $i\geq 1$:
$$\Hom{\Tor iDL}D\cong\Ext iLR=0.$$
The fact that $D$ is faithfully injective implies that $\Tor iDL=0$ for  $i\geq 1$.
\end{subprops}
\end{chunk}

\begin{chunk}\label{obs150421a}
The ring $R$ is regular if and only if every $X\in\catdfb(R)$ has $\pd_R(X)<\infty$.
Hence, the trivial complex $0$ is a $\pd$-test complex if and only if $R$ is regular,
equivalently, if and only if $R$ has a $\pd$-test complex of finite projective dimension.
Similarly, 
if $C$ is a semidualizing $R$-complex, then
$0$ is a  $\gcdim$-test complex if and only if $C$ is dualizing, 
equivalently, if and only if $R$ has a $\gcdim$-test complex of finite projective dimension;
see~\ref{ch150423a1}.
In particular, $0$ is a  $\gdim$-test complex if and only if $R$ is Gorenstein,
equivalently, if and only if $R$ has a $\gdim$-test complex of finite projective dimension.
\end{chunk}

We continue with a discussion of ascent and descent of test complexes.

\begin{thm}\label{prop140825a}
Let $R\xra\vf S$ be a  flat local ring homomorphism, and let $M\in\catdfb(R)$.
\begin{enumerate}[\rm(a)]
\item\label{prop140825a2}
If $\Lotimes SM$ is $\gkdim{\Lotimes SC}$-test  over $S$, then $M$ is   $\gcdim$-test  over $R$.
\item\label{prop140825a2'}
If $\Lotimes SM$ is $\gdim$-test  over $S$, then $M$ is   $\gdim$-test  over $R$.
\item\label{prop140825a1}
If $\Lotimes SM$ is  $\pd$-test  over $S$, then $M$ is $\pd$-test  over $R$.
\end{enumerate}
\end{thm}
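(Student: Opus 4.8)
The statement is about descent of the test-complex property along a flat local map $\vf\colon R\to S$. In all three parts, the strategy is the same: suppose $N\in\catdfb(R)$ is a complex over which $M$ is ``not detecting'' finiteness of the relevant homological dimension, i.e.\ $\Lotimes MN\in\catdb(R)$; we must show the relevant dimension of $N$ is finite. The natural move is to base change to $S$: since $\vf$ is flat, $\Lotimes S{(-)}$ is exact and faithful, so $\Lotimes S{(\Lotimes MN)}\simeq\Lotimes{(\Lotimes SM)}{(\Lotimes SN)}$ is homologically bounded. Moreover $\Lotimes SN\in\catdfb(S)$ since $\vf$ is a (faithfully flat, module-finite-on-homology-preserving) ring map and $N$ is homologically finite. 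Applying the hypothesis that $\Lotimes SM$ is a test complex over $S$ then gives finiteness of the relevant dimension of $\Lotimes SN$ over $S$, and one descends that back to $R$ using the flat-descent facts already recorded in the excerpt.

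Concretely: For part~\eqref{prop140825a1}, once $\pd_S(\Lotimes SN)<\infty$, flat descent of projective dimension (a standard fact, e.g.\ via $\Ext$ and faithful flatness, or cited from the literature) gives $\pd_R(N)<\infty$. For part~\eqref{prop140825a2}, I would not assume $S/\m S$ Gorenstein — it is not in the hypotheses — so I must be a little careful: from $\gkdim{\Lotimes SC}_S(\Lotimes SN)<\infty$ I want $\gcdim_R(N)<\infty$, and this is exactly~\ref{ch150423a3} (the cited \cite[(5.10)]{LarsSD}), which asserts $\gcdim_R(X)<\infty$ iff $\gkdim{\Lotimes SC}_S(\Lotimes SX)<\infty$ for any $X\in\catdfb(R)$ along a flat local map. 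Part~\eqref{prop140825a2'} is the special case $C=R$ of~\eqref{prop140825a2}, since $\Lotimes SR\simeq S$ and $\gkdim{S}_S=\gdim_S$; alternatively invoke Lemma~\ref{lem150421a}\eqref{lem150421a2} after noting $\HH_i(\Lotimes SN)$ need not be $R$-finite in general — so it is cleaner to deduce \eqref{prop140825a2'} from \eqref{prop140825a2} directly rather than from Lemma~\ref{lem150421a}.

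The only mild subtlety — and the step I would watch most carefully — is verifying $\Lotimes SN\in\catdfb(S)$: homological boundedness is immediate since $\Lotimes S{(-)}$ is exact, and homological finiteness over $S$ follows because $\HH_i(\Lotimes SN)\cong\Otimes S{\HH_i(N)}$ (again by flatness) is a finitely generated $S$-module whenever $\HH_i(N)$ is finitely generated over $R$. Similarly one should note $\Lotimes SN\not\simeq 0$ is not needed — the test-complex definition quantifies over all $N\in\catdfb(S)$, including $0$, for which the conclusion is trivially true. So the proof is essentially a two-line base-change-then-descend argument for each part, with the bookkeeping of which descent fact (\ref{ch150423a3}, or flat descent of $\pd$) to cite being the only real content.

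\begin{proof}
Let $N\in\catdfb(R)$ and assume $\Lotimes MN\in\catdb(R)$. Since $\vf$ is flat, the functor $\Lotimes S{(-)}$ is exact, so
\[
\Lotimes S{(\Lotimes MN)}\simeq\Lotimes{(\Lotimes SM)}{(\Lotimes SN)}\in\catdb(S),
\]
i.e.\ $\Tor i{\Lotimes SM}{\Lotimes SN}=0$ for $i\gg 0$. Moreover $\Lotimes SN\in\catdfb(S)$: homological boundedness is clear, and for each $i$ one has $\HH_i(\Lotimes SN)\cong\Otimes S{\HH_i(N)}$, which is finitely generated over $S$ because $\HH_i(N)$ is finitely generated over $R$.

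\eqref{prop140825a1} By hypothesis $\Lotimes SM$ is $\pd$-test over $S$, so $\pd_S(\Lotimes SN)<\infty$. By faithfully flat descent of projective dimension, $\pd_R(N)<\infty$. Hence $M$ is $\pd$-test over $R$.

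\eqref{prop140825a2} By hypothesis $\Lotimes SM$ is $\gkdim{\Lotimes SC}$-test over $S$, so $\gkdim{\Lotimes SC}_S(\Lotimes SN)<\infty$. By~\ref{ch150423a3}, this is equivalent to $\gcdim_R(N)<\infty$. Hence $M$ is $\gcdim$-test over $R$.

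\eqref{prop140825a2'} Apply part~\eqref{prop140825a2} with $C=R$, noting that $\Lotimes SR\simeq S$ and $\gkdim S_S=\gdim_S$, so that a $\gdim$-test complex over $S$ is the same as a $\gkdim{\Lotimes SR}$-test complex over $S$, and $\gkdim R_R=\gdim_R$.
\end{proof}
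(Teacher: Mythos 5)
Your proof is correct and follows essentially the same route as the paper: base change the hypothesis $\Lotimes MN\in\catdb(R)$ along the flat map, apply the test property of $\Lotimes SM$ over $S$, and descend via~\ref{ch150423a3} (resp.\ flat descent of projective dimension, which is the paper's citation of \cite[(1.5.3)]{avramov:rhafgd}), with part~(b) as the case $C=R$. The only cosmetic slip is that the middle term of your displayed isomorphism should carry the tensor over $S$, i.e.\ $\Lotimes[S]{(\Lotimes SM)}{(\Lotimes SN)}$, but this does not affect the argument.
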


\begin{proof}
\eqref{prop140825a2}
Assume that $\Lotimes SM$ is a  $\gkdim{\Lotimes SC}$-test complex over $S$.
To show that $M$ is a  $\gcdim$-test complex over $R$,  let $X\in\catdfb(R)$ such that
$\Lotimes MX\in\catdb(R)$.
By  flatness, the complexes
$\Lotimes SX$, $\Lotimes SM$, and $\Lotimes S{(\Lotimes MX)}$ are all in $\catdfb(S)$.
Moreover, we have the following isomorphisms
in $\catd(S)$:
$$\Lotimes[S]{(\Lotimes SM)}{(\Lotimes SX)}
\simeq\Lotimes{(\Lotimes SM)}X
\simeq\Lotimes S{(\Lotimes MX)}.$$
As $\Lotimes SM$ is a $\gkdim{\Lotimes SC}$-test complex over $S$,  we have $\gkdim{\Lotimes SC}_S(\Lotimes SX)<\infty$.
Using~\ref{ch150423a3}, we conclude that
$\gcdim_R(X)<\infty$, as desired.

\eqref{prop140825a2'}
This is the special case $C=R$ of part~\eqref{prop140825a2}.

\eqref{prop140825a1}
Argue as in part~\eqref{prop140825a2}, using~\cite[(1.5.3)]{avramov:rhafgd} in place of~\ref{ch150423a3}.
\end{proof}

Note that the conditions on $\vf$ in the following three items hold for the natural maps from $R$
to its completion $\comp R$ or to its henselization $R^{\text{h}}$.

\begin{rmk}\label{rmk150424a}
Let $\vf\colon R\to S$ be a flat local ring homomorphism, and
assume that the closed fibre $S/\m S$ is module-finite over $k$.
Let $M\in\catdfb(R)$ and $X\in\catdfb(S)$ such that
$\Lotimes[S] {(\Lotimes {S}M)}X\in\catdb(S)$.
Let $\x=x_1,\ldots,x_n\in\m$ be a  generating sequence for $\m$,
and set $K:=K^{S}(\x)$, the Koszul complex on $\x$ over $S$.
It follows that
$\Lotimes[S]K{(\Lotimes[S] {(\Lotimes {S}M)}X)}\in\catdb(S)$.
From the following isomorphisms
$$
\Lotimes[S]K{(\Lotimes[S] {(\Lotimes {S}M)}X)}
\simeq\Lotimes[S]{(\Lotimes[S]KX)}{(\Lotimes {S}M)}
\simeq\Lotimes[R]{(\Lotimes[S]KX)}{M}
$$
we conclude that $\Lotimes[R]{(\Lotimes[S]KX)}{M}\in\catdb(R)$.

Note that $\Lotimes[S]KX\in\catdfb(R)$. Indeed, we already have $\Lotimes[S]KX\in\catdb(R)$,
so it suffices to show that every homology module $\HH_i(\Lotimes[S]KX)$ is finitely generated over $R$.
We know that $\HH_i(\Lotimes[S]KX)$ is finitely generated over $S$.
Moreover, it is annihilated by $(\x)S=\m S$. Thus, it is a finitely generated $S/\m S$-module; since $S/\m S$ is module finite over $k$,
each $\HH_i(\Lotimes[S]KX)$ is finitely generated over $k$,
so it is finitely generated over $R$.
\end{rmk}

\begin{thm}\label{thm140825ax}
Let $\vf\colon R\to S$ be a flat local ring homomorphism, and let $M\in\catdfb(R)$.
Assume that the closed fibre $S/\m S$ is Gorenstein and module-finite over $k$.
\begin{enumerate}[\rm(a)]
\item \label{thm140825ax1}
The $R$-complex $M$ is $\gcdim$-test  if and only if $\Lotimes SM$ is $\gkdim{\Lotimes SC}$-test  over $S$.
\item \label{thm140825ax2}
The $R$-complex $M$ is  $\gdim$-test  if and only if $\Lotimes SM$ is   $\gdim$-test  over $S$.
\end{enumerate}
\end{thm}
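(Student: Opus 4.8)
The plan is to prove Theorem~\ref{thm140825ax} by combining the descent statement of Theorem~\ref{prop140825a} with a corresponding ascent argument, the latter being the real content here. Part~\eqref{thm140825ax2} is the special case $C=R$ of part~\eqref{thm140825ax1} (using that $\Lotimes SR\simeq S$ and $\gkdim S_S=\gdim_S$), so I would focus entirely on~\eqref{thm140825ax1}. The backward implication is immediate from Theorem~\ref{prop140825a}\eqref{prop140825a2}, which requires only flatness of $\vf$. So the work is the forward implication: assuming $M$ is $\gcdim$-test over $R$, show $\Lotimes SM$ is $\gkdim{\Lotimes SC}$-test over $S$.

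For the forward direction, let $X\in\catdfb(S)$ with $\Lotimes[S]{(\Lotimes SM)}X\in\catdb(S)$; the goal is $\gkdim{\Lotimes SC}_S(X)<\infty$. First I would invoke Remark~\ref{rmk150424a}: letting $\x=x_1,\dots,x_n$ generate $\m$ and $K=K^S(\x)$, that remark produces the complex $Y:=\Lotimes[S]KX$, which lies in $\catdfb(R)$ (crucially with finitely generated homology \emph{over} $R$), and satisfies $\Lotimes[R]YM\in\catdb(R)$. Since $M$ is $\gcdim$-test over $R$, we get $\gcdim_R(Y)<\infty$. Now apply Lemma~\ref{lem150421a}\eqref{lem150421a1} to $Y$ and the flat map $R\to S$ (whose closed fibre is Gorenstein): this gives $\gkdim{\Lotimes SC}_S(Y)<\infty$, i.e., $\gkdim{\Lotimes SC}_S(\Lotimes[S]KX)<\infty$. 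The final step is to descend finiteness of $\gkdim{\Lotimes SC}$ along the Koszul complex: since $K$ is a bounded complex of finitely generated free $S$-modules (so $\pd_S(K)<\infty$ and $K\not\simeq 0$), finiteness of $\gkdim{\Lotimes SC}_S(\Lotimes SK\otimes^{\mathbf L}_S X)$ should force finiteness of $\gkdim{\Lotimes SC}_S(X)$.

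This last step — that $\pd_S(P)<\infty$, $P\not\simeq 0$, and $\gkdim{D'}_S(\Lotimes[S]PX)<\infty$ imply $\gkdim{D'}_S(X)<\infty$, where $D'=\Lotimes SC$ — is the main obstacle, and it is exactly the kind of statement that pairs with Lemma~\ref{fact140901a}. Indeed, after passing to $\comp S$ (which has a dualizing complex, and over which $\gkdim{}$ finiteness is detected by the relevant Auslander class via~\ref{ch150423b1}, while also being faithfully flat so detects finiteness of $\gkdim{}$ by~\ref{ch150423a3}), the condition $\gkdim{D'}_S(\Lotimes[S]PX)<\infty$ becomes membership of $\Lotimes[S]PX$ in an Auslander class $\cata_{E}(\comp S)$ for the appropriate semidualizing $E$; then Lemma~\ref{fact140901a}\eqref{fact140901a1} (equivalence of $Z\in\cata_E$ and $\Lotimes PZ\in\cata_E$) moves this to $X\in\cata_E(\comp S)$, hence $\gkdim{D'}_S(X)<\infty$. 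I would state and prove this descent-along-finite-pd-complexes fact as a small lemma (it is the Koszul-descent analogue of the Burch-style descent), or cite it if it already appears in the literature; care is needed that the semidualizing complex one dualizes against is the correct base-changed one, which is handled by~\ref{ch150423c2} and~\ref{ch150423b2}. Once that lemma is in hand, the two implications assemble immediately and part~\eqref{thm140825ax2} follows by specialization.
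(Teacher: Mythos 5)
Your proposal is correct and follows essentially the same route as the paper: backward implication from Theorem~\ref{prop140825a}\eqref{prop140825a2}, then Remark~\ref{rmk150424a} to produce $\Lotimes[S]KX\in\catdfb(R)$, the test hypothesis to get $\gcdim_R(\Lotimes[S]KX)<\infty$, Lemma~\ref{lem150421a} to transfer this to $S$, and finally descent along the finite-projective-dimension complex $K$. The only cosmetic difference is that for this last step the paper simply cites~\cite[(4.4)]{frankild:rrhffd}, whereas you sketch a proof of it via completion and Lemma~\ref{fact140901a}; both are fine.
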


\begin{proof}
\eqref{thm140825ax1}
One implication is covered by Theorem~\ref{prop140825a}\eqref{prop140825a2}.
For the reverse implication, 
assume that $M$ is a  $\gcdim$-test complex over $R$.
To show that $\Lotimes {S}M$ is a  $\gkdim{\Lotimes {S}C}$-test complex over $S$, 
let $X\in\catdfb(S)$ such that
$\Lotimes[S] {(\Lotimes {S}M)}X\in\catdb(S)$.
Let $\x=x_1,\ldots,x_n\in\m$ be a  generating sequence for $\m$,
and set $K:=K^{S}(\x)$.
By Remark~\ref{rmk150424a}
we have $\Lotimes[R]{(\Lotimes[S]KX)}{M}\in\catdb(R)$ and $\Lotimes[S]KX\in\catdfb(R)$.
Since $M$ is an $\gcdim$-test complex over $R$, one has $\gcdim_R(\Lotimes[S]KX)<\infty$.
It follows from Lemma~\ref{lem150421a} that $\gkdim{\Lotimes {S}C}_{S}(\Lotimes[S]KX)<\infty$.
We deduce from~\cite[(4.4)]{frankild:rrhffd} that
$\gkdim{\Lotimes {S}C}_{S}(X)<\infty$, as desired.

\eqref{thm140825ax2}
This is the special case $C=R$ of part~\eqref{thm140825ax1}.
\end{proof}

Here is one of our main results; see also Corollary \ref{cor140825a} and Theorem \ref{mainobscor}.

\begin{thm}\label{thm140825a}
Let $\vf\colon (R, \fm) \to (S, \fn)$ be a flat local ring homomorphism, and let $M\in\catdfb(R)$.
Assume the induced map $R/\fm \to S/\fm S$ is a finite field extension, i.e., we have $\fm S=\mathfrak{n}$
and the induced map $R/\fm \to S/\fm S$ is finite.
Then $M$ is a  $\pd$-test complex over $R$ if and only if $\Lotimes SM$ is a $\pd$-test complex over $S$.
\end{thm}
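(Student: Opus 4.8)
The ``if'' direction is already contained in Theorem~\ref{prop140825a}\eqref{prop140825a1}, so the plan is to prove the converse: assuming $M$ is a $\pd$-test complex over $R$, I will show that $\Lotimes SM$ is a $\pd$-test complex over $S$. The argument follows the pattern of Theorem~\ref{thm140825ax}\eqref{thm140825ax1}: given an arbitrary $S$-complex, use the Koszul-complex reduction of Remark~\ref{rmk150424a} to replace it by one that is homologically finite over $R$, apply the $\pd$-test hypothesis for $M$ there, and transfer the conclusion back to $S$ by a change-of-rings computation of Tor against residue fields. Note first that, since $S/\fm S$ is a field, it has no proper nonzero ideals, whence $\fm S=\fn$; thus $k':=S/\fn=S/\fm S$ is module-finite over $k$, which is the form of the hypothesis needed below.

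Let $X\in\catdfb(S)$ with $\Lotimes[S]{(\Lotimes SM)}{X}\in\catdb(S)$; we must show $\pd_S(X)<\infty$. Fix a generating sequence $\x=x_1,\dots,x_n$ for $\fm$ and put $K:=K^S(\x)$, the Koszul complex over $S$ (so $\x S$ generates $\fm S=\fn$), and set $A:=\Lotimes[S]KX$. By Remark~\ref{rmk150424a} (whose hypothesis on the closed fibre holds by the previous paragraph) we have $A\in\catdfb(R)$ and $\Lotimes[R]{A}{M}\in\catdb(R)$. Since $M$ is $\pd$-test over $R$, this forces $\pd_R(A)<\infty$; because $A\in\catdfb(R)$, this is equivalent to $\Tor{i}{A}{k}=0$ for $i\gg0$.

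It remains to deduce $\Tor[S]{i}{X}{k'}=0$ for $i\gg0$. Put $W:=\Lotimes[S]{X}{k'}$, a complex of $k'$-vector spaces. Flatness of $\vf$ and $\fm S=\fn$ give $\Lotimes Sk\simeq S/\fm S=k'$, so by associativity of the derived tensor product over $k\to R\to S\to k'$ one has $\Lotimes{A}{k}\simeq\Lotimes[S]{A}{k'}\simeq\Lotimes[S]{K}{W}$. Since every $x_i$ lies in $\fm\subseteq\fn$, its image in $k'$ is $0$, so $\Otimes[S]{K}{k'}$ is the Koszul complex of $k'$ on the zero sequence of length $n$ --- a bounded nonzero complex of finite-dimensional $k'$-vector spaces with zero differential, isomorphic to $\bigoplus_{j=0}^{n}\shift^j(k')^{\binom{n}{j}}$. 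As $K$ is a bounded complex of finite free $S$-modules, we obtain
$$
\Lotimes{A}{k}\simeq\Lotimes[S]{K}{W}\simeq\Otimes[S]{K}{W}\simeq\Otimes[k']{(\Otimes[S]{K}{k'})}{W}\simeq\bigoplus_{j=0}^{n}\shift^j W^{\binom{n}{j}},
$$
and therefore $\Tor{i}{A}{k}\cong\bigoplus_{j=0}^{n}\HH_{i-j}(W)^{\binom{n}{j}}$ for every $i$. Combining this with the vanishing from the previous paragraph forces $\Tor[S]{\ell}{X}{k'}=\HH_\ell(W)=0$ for $\ell\gg0$; since $X\in\catdfb(S)$, we conclude $\pd_S(X)<\infty$, as desired.

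I expect the only genuine subtlety to be the bookkeeping with the base-change isomorphisms over the ring tower $k\to R\to S\to k'$, and confirming that Remark~\ref{rmk150424a} is available here --- which it is, precisely because the hypothesis ``finite field extension'' simultaneously encodes $\fm S=\fn$ and module-finiteness of the closed fibre over $k$. Everything else rests on the standard fact that, for a homologically finite complex over a local ring, finiteness of projective dimension is detected by the vanishing of all sufficiently high Tor modules against the residue field.
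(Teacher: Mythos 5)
Your argument is correct, and its second half takes a genuinely different route from the paper's. Both proofs begin identically: the ``if'' direction is Theorem~\ref{prop140825a}\eqref{prop140825a1}, and for the converse both use Remark~\ref{rmk150424a} to reduce to $A:=\Lotimes[S]KX\in\catdfb(R)$ with $\Lotimes[R]AM\in\catdb(R)$, whence $\pd_R(A)<\infty$. At that point the paper splits into two cases: it first assumes $S$ complete and invokes \cite[(2.5)]{avramov:holh} to pass from $\pd_R(A)<\infty$ to $\pd_S(A)<\infty$, strips off $K$ via \cite[(1.5.3)]{avramov:rhafgd}, and then handles general $S$ by completing and descending through Theorem~\ref{prop140825a}\eqref{prop140825a1} again. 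You instead finish by a direct residue-field computation: the base-change isomorphisms $\Lotimes Ak\simeq\Lotimes[S]A{k'}\simeq\Lotimes[S]K{(\Lotimes[S]X{k'})}$ together with the degeneration $\Otimes[S]K{k'}\cong\bigoplus_{j=0}^n\shift^j(k')^{\binom{n}{j}}$ (the Koszul generators die in $k'$ precisely because $\fm S=\fn$) yield $\Tor iAk\cong\bigoplus_{j=0}^n\Tor[S]{i-j}X{k'}^{\binom{n}{j}}$, and the $j=0$ summand gives the required vanishing of $\Tor[S]{\ell}X{k'}$ for $\ell\gg 0$, hence $\pd_S(X)<\infty$. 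Your route is more self-contained: it avoids the completion case split and the descent theorem of Avramov--Iyengar--Miller entirely, needing only the standard fact that for homologically finite complexes over a local ring, finite projective dimension is detected by eventual vanishing of Tor against the residue field; it also makes transparent exactly where the hypothesis $\fm S=\fn$ enters. The paper's route is shorter on the page given the cited machinery, and parallels the structure of the proof of Theorem~\ref{thm140825ax}, which is presumably why the authors chose it.
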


\begin{proof}
One implication is covered by Theorem~\ref{prop140825a}\eqref{prop140825a1}.
For the reverse implication, assume that $M$ is a  $\pd$-test complex over $R$.

Case 1: $S$ is complete.
To show that $\Lotimes {S}M$ is a  $\pd$-test complex over $S$, 
let $X\in\catdfb(S)$ such that
$\Lotimes[S] {(\Lotimes {S}M)}X\in\catdb(S)$.
Let $\x=x_1,\ldots,x_n\in\m$ be a  generating sequence for $\m$,
and set $K:=K^{S}(\x)$.
It follows that
$\Lotimes[S]K{(\Lotimes[S] {(\Lotimes {S}M)}X)}\in\catdb(S)$.
By Remark~\ref{rmk150424a}
we have $\Lotimes[R]{(\Lotimes[S]KX)}{M}\in\catdb(R)$ and $\Lotimes[S]KX\in\catdfb(R)$.
As $M$ is an $\pd$-test complex over $R$, we have $\pd_R(\Lotimes[S]KX)<\infty$.
It follows from~\cite[(2.5)]{avramov:holh} that $\pd_{S}(\Lotimes[S]KX)<\infty$, and~\cite[(1.5.3)]{avramov:rhafgd} implies that
$\pd_{S}(X)<\infty$.

Case 2: the general case.
Case 1 implies that
$\Lotimes {\comp S}M$ is a  $\pd$-test complex over $\comp S$,
i.e.,
$\Lotimes[S]{\comp S}{(\Lotimes SM)}$ is a  $\pd$-test complex over $\comp S$.
So, $\Lotimes SM$ is a  $\pd$-test complex over $S$, 
by Theorem~\ref{prop140825a}\eqref{prop140825a1}, as desired.
\end{proof}

The next example, from discussions with Ryo Takahashi,
shows that the hypothesis $\fm S=\mathfrak{n}$ is necessary for the conclusion of Theorem \ref{thm140825a}. 

\begin{ex} \label{exRyo} Let $k$ be a field, $R=k$ and $S=k[\![y]\!]/(y^2)$. Then the natural map $R \to S$ is a finite free map since $S$ is free over $R$ with $R$-basis $\{1,y\}$. Let $M=R$. Then, since $R$ is regular, $M$ is a $\pd$-test module over $R$. However $N=M\otimes_{R}S=S$ is not a $\pd$-test module over $S$ since 
$S$ is not regular; see~\ref{obs150421a}.
\end{ex}

On the other hand, we do not know whether or not having a regular closed fibre in Theorem \ref{thm140825a} is sufficient, as we note next.

\begin{que}
Let $\vf\colon (R, \fm) \to (S, \fn)$ be a flat local ring homomorphism, and let $M\in\catdfb(R)$.
Assume that $S/\fm S$ is regular.
If $M$ is a  $\pd$-test complex over $R$, then must $\Lotimes SM$ be a $\pd$-test complex over $S$?
\end{que}

The next corollary answers Question~\ref{q1}\eqref{item140831b}.

\begin{cor}  \label{cor140825b}
Let $M$ be an $R$-module, and set $\comp C:=\Lotimes{\comp R}C$.
\begin{enumerate}[\rm(a)]
\item\label{cor140825b1}
The module $M$ is $\gcdim$-test  over $R$ if and only if $\comp M$ is   $\gkdim{\comp C}$-test  over $\comp R$.
\item\label{cor140825b1'}
The module $M$ is  $\gdim$-test  over $R$ if and only if $\comp M$ is   $\gdim$-test over $\comp R$.
\item\label{cor140825b2}
The module $M$ is   $\pd$-test  over $R$ if and only if $\comp M$ is $\pd$-test  over $\comp R$.
\end{enumerate}
\end{cor}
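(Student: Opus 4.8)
The plan is to obtain all three equivalences as direct instances of the ascent--descent results already established, applied to the completion map $\vf\colon R\to\comp R$. The first step is to record that $\vf$ is a flat local ring homomorphism whose closed fibre is $\comp R/\fm\comp R\cong k$; this is a field, hence Gorenstein and (trivially) module-finite over $k$. Moreover $\fm\comp R$ is the maximal ideal of $\comp R$ and the induced residue field extension is the identity $k\to k$, which is finite. Thus $\vf$ satisfies the hypotheses of Theorems~\ref{thm140825ax} and~\ref{thm140825a}. I would also note the standard identifications $\comp M\simeq\Lotimes{\comp R}M$ (valid since $\comp R$ is $R$-flat and $M$ is finitely generated, so $M\in\catdfb(R)$) and $\comp C=\Lotimes{\comp R}C$, the latter being the definition of $\comp C$ in the statement.

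For part~\eqref{cor140825b1}: by Observation~\ref{obs1}, the module $M$ is $\gcdim$-test over $R$ if and only if it is a $\gcdim$-test complex over $R$, and likewise $\comp M$ is $\gkdim{\comp C}$-test over $\comp R$ if and only if it is a $\gkdim{\comp C}$-test complex over $\comp R$. Since $M\in\catdfb(R)$, Theorem~\ref{thm140825ax}\eqref{thm140825ax1} applied with $S=\comp R$ yields that $M$ is a $\gcdim$-test complex over $R$ if and only if $\Lotimes{\comp R}M\simeq\comp M$ is a $\gkdim{\comp C}$-test complex over $\comp R$. Chaining these equivalences gives~\eqref{cor140825b1}. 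Part~\eqref{cor140825b1'} is the special case $C=R$ (equivalently, one applies Theorem~\ref{thm140825ax}\eqref{thm140825ax2} in place of~\eqref{thm140825ax1}).

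For part~\eqref{cor140825b2}: exactly as above, Observation~\ref{obs1} translates between $\pd$-test modules and $\pd$-test complexes on both sides, and Theorem~\ref{thm140825a} with $S=\comp R$ --- whose hypotheses $\fm S=\fn$ and finiteness of the residue field extension were verified in the first step --- gives that $M$ is a $\pd$-test complex over $R$ if and only if $\Lotimes{\comp R}M\simeq\comp M$ is a $\pd$-test complex over $\comp R$.

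There is essentially no obstacle remaining at this stage: all of the real work lives in Theorems~\ref{thm140825ax} and~\ref{thm140825a} (and, through them, in Remark~\ref{rmk150424a}, which reduces statements about an arbitrary homologically finite $\comp R$-complex $X$ to a Koszul twist that is homologically finite over $R$, the key point making the descent argument go through). The only points requiring any care here are the verification that the completion map meets the hypotheses of those theorems and the two identifications $\comp M\simeq\Lotimes{\comp R}M$ and $\comp C=\Lotimes{\comp R}C$, both of which are routine.
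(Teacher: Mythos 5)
Your proposal is correct and follows essentially the same route as the paper, which simply observes that $\comp M\simeq\Lotimes{\comp R}M$ in $\catdfb(\comp R)$ and invokes Theorems~\ref{thm140825ax} and~\ref{thm140825a} for the completion map. Your additional verifications (that the closed fibre of $R\to\comp R$ is $k$, hence Gorenstein, module-finite over $k$, and gives a finite residue field extension with $\fm\comp R$ maximal, plus the module-versus-complex translation via~\ref{obs1}) are exactly the routine checks the paper leaves implicit.
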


\begin{proof}
Since $\Lotimes{\comp R}M\simeq\comp M$ in $\catdfb(\comp R)$, the desired conclusions follow from
Theorems~\ref{thm140825ax} and~\ref{thm140825a}.
\end{proof}

The next  corollary answers Question~\ref{q1}\eqref{item140831a}. 
We are able to improve this result significantly in the next section; see Theorem~\ref{mainobscor} and the subsequent paragraph.

\begin{cor} \label{cor140825a} Let $M$ be a  $\pd$-test module over $R$. If $\gdim_R(M)<\infty$, then $R$ is Gorenstein.
\end{cor}

\begin{proof}
Corollary~\ref{cor140825b}\eqref{cor140825b2} says that $\comp M$ is a $\pd$-test module for $\comp R$,
with $\gdim_{\comp R}(\comp M)<\infty$ by~\ref{ch150423a3}.
Using~\cite[(1.3)]{CDtest}, we conclude that $\comp R$ is Gorenstein, hence so is $R$.
\end{proof}

We end this section by building a 
module that is $\gdim$-test but not $\pd$-test;
see Example~\ref{ex140816a}.

\begin{prop}\label{prop140816a}
Let $\vf\colon (A,\n,F)\to R$ be a flat local ring homomorphism, and set $N:=R/\n R\simeq \Lotimes[A]R{F}$.
\begin{enumerate}[\rm(a)]
\item \label{prop140816a2}
The $R$-module $N$ is Tor-rigid, i.e., for any finitely generated $R$-module $M$, if $\Tor iMN=0$ for some $i\geq 1$, then $\Tor jMN=0$ for all $j\geq i$.
\item \label{prop140816a1}
Let $B$ be a semidualizing $A$-complex, and set $C:=\Lotimes[A]RB$.
If $R/\n R$ is Gorenstein, then $N$ is a $\gcdim$-test complex over $R$.
\item \label{prop140816a1'}
If $R/\n R$ is Gorenstein, then $N$ is a $\gdim$-test complex over $R$.
\end{enumerate}
\end{prop}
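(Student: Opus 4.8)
The plan is to prove the three parts in order, each building on the last. For part~\eqref{prop140816a2}, the key observation is that $N = R/\n R \simeq \Lotimes[A] R F$, so for any finitely generated $R$-module $M$ one has, by flat base change along $\vf$, $\Lotimes[R] M N \simeq \Lotimes[R] M {(\Lotimes[A] R F)} \simeq \Lotimes[A] M F$, where on the right $M$ is viewed as an $A$-module via $\vf$. Since $F$ is the residue field of $A$, the complex $\Lotimes[A] M F \simeq \Lotimes[A] M F$ computes $\Tor A i M F$, and these Tor modules are $F$-vector spaces. The Tor-rigidity of the residue field over any noetherian local ring is classical (vanishing of a single $\Tor A i M F$ forces $\pd_A(M) < i$, hence vanishing of all higher ones), so $N$ inherits this rigidity over $R$; I would cite the standard fact and spell out the base-change isomorphism.

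For part~\eqref{prop140816a1}, I would follow the strategy already used in~\ref{obs140825a3} and the ascent/descent machinery of this section. Since $R/\n R$ is Gorenstein, the hypotheses of Theorem~\ref{thm140825ax} are met with the roles of $R$ and $S$ played by $A$ and $R$: the closed fibre $R/\n R$ is Gorenstein, but I need it to also be module-finite over $F$, which it is since $R/\n R$ is a local ring with residue field $F$ and it equals its own residue field precisely when $\vf$ is "almost flat with Gorenstein fibre" — more carefully, I should either assume or extract from the setup that $R/\n R$ is artinian (Gorenstein local with $F$ as residue field, finite length), which makes it module-finite over $F$. Then by Theorem~\ref{thm140825ax}\eqref{thm140825ax1}, $N = \Lotimes[A] R F$ is $\gkdim{C}$-test over $R$ (with $C = \Lotimes[A] R B$) if and only if $F$ is $\gkdim B$-test over $A$. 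But $F$ is the residue field of $A$, and by the analogue of~\ref{obs150421a} together with~\ref{ch150423a1}, the residue field is always a $\gkdim B$-test object: indeed if $\Lotimes[A] F Y \in \catdb(A)$ for $Y \in \catdfb(A)$ then $\pd_A(Y) < \infty$ by Auslander--Buchsbaum--Serre, hence $\gkdim B_A(Y) < \infty$ by~\ref{obs140825a1}. Substituting this back gives the claim. Part~\eqref{prop140816a1'} is then just the special case $B = A$ of part~\eqref{prop140816a1}, giving $C = R$ and $\gkdim C = \gdim$.

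The main obstacle I anticipate is the module-finiteness condition needed to invoke Theorem~\ref{thm140825ax}: that theorem requires the closed fibre $S/\m S$ to be module-finite over $k$, and here the closed fibre is $R/\n R$ over $F$. One must be careful that "$R/\n R$ is Gorenstein" in the statement is really shorthand for "$R/\n R$ is artinian Gorenstein" (equivalently, $\vf$ has zero-dimensional Gorenstein closed fibre), which is the natural reading given $N := R/\n R \simeq \Lotimes[A] R F$ is asserted to be a single residue-field-type object; with that reading the module-finiteness is automatic. The rest is a bookkeeping exercise in chasing the base-change isomorphisms and applying the already-established Theorems~\ref{thm140825ax} and the characterization~\ref{ch150423a1}; no genuinely new input beyond Tor-rigidity of the residue field is required.
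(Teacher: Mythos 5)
Your part~(a) reaches the correct conclusion, but the fact you cite does not apply as stated: the classical rigidity of the residue field (``$\Tor[A]iMF=0$ forces $\pd_A(M)<i$'') is a statement about \emph{finitely generated} $A$-modules, whereas here $M$ is only finitely generated over $R$ and is typically not finitely generated over $A$ (think of $A\to\comp A$, or $A\to A[[x]]$). The paper instead applies the local criterion of flatness \cite[(22.3)]{Mat} -- whose finiteness hypothesis is ``finitely generated over a noetherian local $A$-algebra'', which the $R$-syzygies of $M$ do satisfy -- and dimension-shifts over $R$. Your step is fixable, but not by the fact you quote.

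The more serious problem is part~(b). Your route goes through Theorem~\ref{thm140825ax} applied to $\vf\colon A\to R$, and that theorem requires the closed fibre to be module-finite over the residue field of the source; for $\vf$ this forces $R/\n R$ to be artinian with $k$ finite over $F$. The proposition assumes only that $R/\n R$ is Gorenstein, and it may have positive dimension (e.g.\ $A\to A[[x]]$ has closed fibre $F[[x]]$); your suggestion to read ``Gorenstein'' as ``artinian Gorenstein'' is not a reading of the statement but a weakening of it, and nothing in the setup justifies it. The paper's proof avoids applying Theorem~\ref{thm140825ax} to $\vf$ altogether: it first reduces to the case where $A$ and $R$ are complete (Corollary~\ref{cor140825b}\eqref{cor140825b1} is invoked only for $R\to\comp R$, whose closed fibre is a field), then uses~\ref{ch150423c2} to see that $D^R:=\Lotimes[A]R{D^A}$ is dualizing -- this is where the Gorenstein fibre enters, with no artinian hypothesis -- and finally observes that $\Lotimes MN\simeq\Lotimes[A]MF\in\catdb(R)$ forces $\fd_A(M)<\infty$ by \cite[(5.5.F)]{avramov:hdouc}, so that $M\in\cata_{\da B}(A)$ by \cite[(4.4)]{LarsSD}, hence $M\in\cata_{\da C}(R)$ by \cite[(5.3.a)]{LarsSD}, which gives $\gcdim_R(M)<\infty$ by~\ref{ch150423b1}. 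To salvage your approach you must either add the module-finiteness hypothesis (thereby proving a strictly weaker proposition) or replace the appeal to Theorem~\ref{thm140825ax} by an Auslander-class argument of this kind. Part~(c) as the special case $B=A$ is fine.
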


\begin{proof}
First, note that we have $N:=R/\n R\cong\Otimes[A]R{(A/\n)}=\Otimes[A]RF\simeq \Lotimes[A]R{F}$ since $R$ is flat over $A$.
Furthermore, for every $R$-complex $M$, there are isomorphisms
\begin{equation}\label{eq150424a}
\Lotimes MN\simeq\Lotimes M{(\Lotimes[A] RF)}\simeq\Lotimes[A]MF.
\end{equation}
In particular, one has $\Tor iMN\cong\Tor[A]iMF$ for all $i$. 

\eqref{prop140816a2}
Let $M$ be a finitely generated $R$-module.
From~\cite[(22.3)]{Mat}, if $\Tor[A] 1MF=0$, then $M$ is flat over $A$, so we have $\Tor[A] jMF=0$ for all $j\geq 1$.
More generally, by dimension-shifting, if $\Tor[A] iMF=0$ for some $i\geq 1$, then  we have $\Tor[A] jMF=0$ for all $j\geq i$.
Thus, the isomorphism from the previous paragraph implies the desired Tor-rigidity.

\eqref{prop140816a1}
Corollary~\ref{cor140825b}\eqref{cor140825b1} shows that it suffices to show that $\comp N$ is a $\gkdim{\Lotimes {\comp R}C}$-test complex over $\comp R$.
Note that the induced map $\comp\vf\colon\comp A\to \comp R$ is flat and local with Gorenstein closed fibre $\comp{R/\n R}$.
Also, there are isomorphisms
in $\catd(\comp R)$:
$$\Lotimes[\comp A]{\comp R}{(\Lotimes[A]{\comp A}{B})}
\simeq \Lotimes[A]{\comp R}{B}
\simeq \Lotimes[R]{\comp R}{(\Lotimes[A]{R}{B})}
\simeq \Lotimes[R]{\comp R}{C}.
$$
Thus, we may replace  $\vf$ with the induced map $\comp\vf$ to assume for the rest of the proof that $A$ and $R$ are complete.
Let $D^{A}$ be a dualizing $A$-complex; see~\ref{ch150423c1}.
Then the $R$-complex $D^{R}:=\Lotimes[A]{R}{D^{A}}$ is dualizing for $R$, by~\ref{ch150423c2}.
Set $\da B:=\Rhom[A]{B}{D^A}$ and $\da C:=\Rhom C{D^R}$, noting that $\da C\simeq \Lotimes[A]{R}{\da B}$
by~\cite[proof of (5.10)(*)]{LarsSD}.

Let $M\in\catdfb(R)$ such that $\Lotimes MN\in\catdb(R)$.
We need to show  $\gcdim_R(M)<\infty$
i.e.,  that $M\in\cata_{\da C}(R)$; see~\ref{ch150423b1}.
By~\eqref{eq150424a}, the complex $\Lotimes[A]MF$ is in $\catdb(R)$.
Since $M$ is in $\catdfb(R)$, we conclude that $\fd_A(M)<\infty$ by~\cite[(5.5.F)]{avramov:hdouc}.
It follows from~\cite[(4.4)]{LarsSD} that $M\in\cata_{\da B}(A)$, so $M\in\cata_{\Lotimes[A]{R}{\da B}}(R)=\cata_{\da C}(R)$ by~\cite[(5.3.a)]{LarsSD}.

\eqref{prop140816a1'}
This is the special case $B=A$, hence $C=R$, of part~\eqref{prop140816a1}.
\end{proof}

\begin{ex}\label{ex140816a}
Let $k$ be a field. Consider the finite-dimensional local $k$-algebras
$A:=k[y,z]/(y,z)^2$ and 
$$R:=k[x,y,z]/(x^2,y^2,z^2,yz)\cong A[x]/(x^2).$$
Notice that $R$ is free over $A$, hence flat.
Also, the natural map $A\to R$ is local with Gorenstein closed fibre $R/\n R\cong k[x]/(x^2)$;
here, as in Proposition~\ref{prop140816a}, we let $\n$ denote the maximal ideal of $A$.
Since the assumptions of Proposition~\ref{prop140816a} are satisfied, the $R$-module
$$N=\Otimes[A]  Rk=\Otimes[A]  R{(A/(y,z)A)}\cong R/(y,z)R$$
is Tor-rigid and $\gdim$-test over $R$. 
Furthermore, from~\cite[(4.1)]{CDtest}, we know that $N$ is not $\pd$-test.

Since $A$ is artinian and local, the injective hull $D^{A}=E_R(k)$ is a dualizing $A$-module.
Thus, the $R$-module $D^{R}:=\Otimes[A]R{D^A}\simeq\Lotimes[A]{R}{D^{A}}$ is dualizing for $R$, by~\ref{ch150423c2}.
We conclude by showing that $D^R$ is also a $\gdim$-test module that is not $\pd$-test and, moreover, is not Tor-rigid.

Note that $A$ has length 3 and type 2.
From this, we construct an exact sequence over $A$ of the following form:
\begin{equation}\label{eq150424y}
0\to k^3\to A^2\to D^A\to 0.
\end{equation}
Indeed, the condition $\operatorname{type}(A)=2$ says that $D^A$ is minimally generated by 2 elements.
Let $A^2\xra{p}D^A\to 0$ be a minimal presentation, and consider the corresponding short exact sequence
$$0\to \operatorname{Ker}(p)\to A^2\xra{p}D^A\to 0.$$
From the minimality of the presentation, it follows that $\operatorname{Ker}(p)\subseteq\n A^2$.
Since $\n^2=0$, we conclude that $\operatorname{Ker}(p)$ is a $k$-vector space, so we need only verify that
$\len_A(\operatorname{Ker}(p))=3$.
This equality follows from  the additivity of length, via the condition $\len_A(D^A)=\len(A)=3$.

Since $R$ is flat over $A$, we apply the base-change functor $\Otimes[A]R-$ to the sequence~\eqref{eq150424y}
to obtain the next exact sequence over $R$:
\begin{equation*}
0\to N^3\to R^2\to D^R\to 0.
\end{equation*}
For any $R$-module $M$, the associate long exact in $\Tor iM-$ shows that we have
$\Tor {i}M{D^R}\cong\Tor {i-1}MN^3$
for all $i\geq 2$. In particular,  we have 
\begin{equation}\label{eq150424b}
\text{$\Tor {i}M{D^R}=0$ if and only if $\Tor {i-1}MN=0$, for $i\geq 2$.}
\end{equation}

Claim: $D^R$ is $\gdim$-test over $R$.
To show this, let $M$ be a finitely generated $R$-module such that $\Tor iM{D^R}=0$ for $i\gg 0$.
The display~\eqref{eq150424b} implies that $\Tor iMN=0$ for $i\gg 0$.
Since $N$ is  $\gdim$-test  over $R$, we have $\gdim_R(M)<\infty$, as desired.

Claim: $D^R$ is not $\pd$-test over $R$.
To show this, suppose by way of contradiction that $D^R$ is $\pd$-test over $R$.
We  show that $N$ is $\pd$-test, contradicting~\cite[(4.1)]{CDtest}. 
Let $M$ be a finitely generated $R$-module such that $\Tor iM{N}=0$ for $i\gg 0$.
The display~\eqref{eq150424b} implies that $\Tor iM{D^R}=0$ for $i\gg 0$.
Since $D^R$ is $\pd$-test, we have $\pd_R(M)<\infty$. Thus, $N$ is $\pd$-test, giving the advertised contradiction, and establishing the claim.

Claim: If $M$ is a finitely generated $R$-module such that $\Tor iM{D^R}=0$ for some $i\geq 2$, then
$\Tor jM{D^R}=0$ for all $j\geq i$. (This shows that $D^R$ is almost Tor-rigid.)
Since $N$ is Tor-rigid by Proposition~\ref{prop140816a}\eqref{prop140816a2}, 
this follows from~\eqref{eq150424b}.

Claim: $D^R$ is not Tor-rigid over $R$.
To this end, we follow a construction of~\cite[Chapter 3]{EG} and 
build an $R$-module $L$ such that $\Tor 1L{D^R}=0$ and $\Tor iL{D^R}\neq 0$ for all $i\geq 2$. Let $f_1,f_2$ be a minimal generating sequence for $\Hom[A]kA\cong k^2$.
For instance, $f_1(1)=y$ and $f_2(1)=z$ will work here.
Define $u\colon k\to A^2$ by the formula $a\mapsto (f_1(a),f_2(a))$.
Since $k$ is simple, the non-zero map $u$ is a monomorphism.
Let $M:=\coker(u)$. The long exact sequence in $\Ext[A]{}-A$ 
associated to the sequence
\begin{equation}\label{eq150424x}
0\to k\xra u A^2\to M\to 0
\end{equation}
shows  that
$\Ext[A] 1MA=0$.

Set $L:=\Otimes[A]RM$.
To make things concrete, if one uses the specific functions $f_1,f_2$ suggested in the previous paragraph, then 
$M$ has the following minimal free presentation over $A$
$$A\xra{\left(\begin{smallmatrix}y\\ z\end{smallmatrix}\right)}A^2\to M\to 0$$
so $L$ has the following minimal free presentation over $R$
$$R\xra{\left(\begin{smallmatrix}y\\ z\end{smallmatrix}\right)}R^2\to L\to 0.$$
Now, flat base-change implies that
$$\Ext 1LR \cong \Otimes[A]R{\Ext[A] 1MA}=0.$$
Thus, we have
$$\Hom{\Tor 1{D^R}L}{D^R}\cong\Ext 1LR=0.$$
The fact that $D^R$ is faithfully injective over $R$ implies that $\Tor 1{D^R}L=0$.

Since $A$ is not regular, we have $\pd_A(k)=\infty$ and $\Tor[A]i{k}k\neq 0$ for all $i>0$. Therefore, by \eqref{eq150424x}, 
we conclude that $\pd_A(M)=\infty$. In particular, this implies that
$\Tor[A]1{k}M\neq 0$. So it follows from \eqref{eq150424y} and \eqref{eq150424x} that
$$\Tor[A]i{D^A}M \cong \Tor[A]{i-1}{k^3}M\cong \Tor[A]{i-2}{k^3}k \cong  (\Tor[A]{i-2}{k}k)^3 \neq 0 \text{ for all } i>2$$
and
$$\Tor[A]2{D^A}M \cong \Tor[A]{1}{k^3}M \cong (\Tor[A]{1}{k}M)^3 \neq 0.$$
Thus flat base-change implies that
$$\Tor i{D^R}L\cong\Otimes[A]R{\Tor[A]i{D^A}M}\neq 0 \text{ for all } i\geq 2.$$ 
This completes the claim and the example.
\end{ex}

\section{Detecting the Dualizing and Gorenstein Properties}
\label{sec150429b}

Our next result yields both Theorems~\ref{mainobscor} and~\ref{cor2} highlighted in the introduction.
Note that condition~\eqref{mainobs2} in this result does not assume
\emph{a priori} that $R$ has a dualizing complex; however, the result shows that this condition implies
that $R$ has a dualizing complex.

\begin{thm} \label{mainobs}   
Let $B$, $C$ be semidualizing $R$-complexes.
Let $M \not\simeq 0$ be a  $\gkdim B$-test $R$-complex such that $\Rhom MC \in \catdb(R)$. 
Assume that one of the following conditions holds:
\begin{enumerate}[\rm(1)]
\item\label{mainobs1}   
The ring $R$ has a dualizing complex $D$, and $\gkdim{\da B}_R(C)<\infty$ where
$\da B:=\Rhom BD$, or
\item\label{mainobs2} 
One has $C\in\catab(R)$.
\end{enumerate}  
Then $\Lotimes BC$ is dualizing for $R$.
\end{thm}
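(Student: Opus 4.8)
The plan is to first reduce to the case where $R$ has a dualizing complex, then show that the test hypothesis forces $C$ (suitably twisted) to behave like $B^\dagger$, and finally upgrade ``semidualizing of finite injective dimension'' to ``dualizing''. Concretely: under condition~\eqref{mainobs1} we already have $D$; under condition~\eqref{mainobs2}, I would pass to the completion $\comp R$, which has a dualizing complex $D$ by~\ref{ch150423c1}. Here I need that $\Lotimes{\comp R}M$ is still $\gkdim{\Lotimes{\comp R}B}$-test (Corollary~\ref{cor140825b}\eqref{cor140825b1} together with~\ref{obs1}, or Theorem~\ref{thm140825ax}), that $\Lotimes{\comp R}B$, $\Lotimes{\comp R}C$ remain semidualizing (\ref{ch150423c2}), that $\Rhom[\comp R]{\Lotimes{\comp R}M}{\Lotimes{\comp R}C}$ stays homologically bounded, that $\Lotimes{\comp R}C\in\cata_{\Lotimes{\comp R}B}(\comp R)$ (\ref{ch150423b2}), and that descent of ``dualizing'' along $R\to\comp R$ holds (\ref{ch150423c2} in reverse, or the standard faithfully flat descent of finite injective dimension plus the fact that being semidualizing descends). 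So without loss of generality $R$ has a dualizing complex $D$, and I set $\da B:=\Rhom BD$, which is semidualizing by~\ref{ch150423c1} with $B\simeq\Rhom{\da B}D$.

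The heart of the argument is to show $C\simeq\da B$, for then $\Lotimes BC\simeq\Lotimes B{\Rhom BD}=\xi^B_D(D)$, and since $D\in\catb_B(R)$ (every complex lies in the Bass class of a semidualizing complex when... actually I'd rather argue: $D$ is dualizing hence $\Rhom{(-)}D$ is a dualizing functor, and $\Lotimes B{\Rhom BD}\simeq D$ follows because applying $\Rhom{(-)}D$ to the canonical map $\Lotimes B{\Rhom BD}\to D$ and using that $\Rhom BD=\da B$ is semidualizing so $\Rhom{\da B}{\da B}\simeq R$), giving $\Lotimes BC\simeq D$, which is dualizing. To get $C\simeq\da B$: in case~\eqref{mainobs1} the hypothesis $\gkdim{\da B}_R(C)<\infty$ means $C\in\cata_B(R)$ by~\ref{ch150423b1} (with roles: $C$ is derived $\da B$-reflexive iff $C\in\cata_{\Rhom{\da B}D}(R)=\cata_B(R)$). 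In case~\eqref{mainobs2} we are handed $C\in\catab(R)$ directly. So in either case $C\in\cata_B(R)$, hence $\Lotimes BC\in\catdb(R)$ and the natural map $\gamma^B_C\colon C\to\Rhom B{\Lotimes BC}$ is an isomorphism. Now I use that $M$ is $\gkdim B$-test together with $\Rhom MC\in\catdb(R)$: the plan is to show $\Lotimes M{\Lotimes BC}\in\catdb(R)$, forcing $\gkdim B_R(\Lotimes BC)<\infty$, i.e.\ $\Lotimes BC\in\catb_{\da B}(R)$ by~\ref{ch150423b1} again (derived $B$-reflexive $\iff$ lies in $\cata_{\da B}$; I need the Bass-class version here, so more precisely: $\gkdim B_R(X)<\infty\iff X\in\cata_{\Rhom BD}(R)=\cata_{\da B}(R)$, and for $X=\Lotimes BC$ we are saying $\Lotimes BC\in\cata_{\da B}(R)$). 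To produce $\Lotimes M{\Lotimes BC}\in\catdb(R)$ from $\Rhom MC\in\catdb(R)$, I'd combine the two via the tensor-evaluation/Hom-evaluation morphisms: since $B$ is semidualizing and $C\in\cata_B(R)$, one has $C\simeq\Rhom B{\Lotimes BC}$, so $\Rhom MC\simeq\Rhom M{\Rhom B{\Lotimes BC}}\simeq\Rhom{\Lotimes MB}{\Lotimes BC}$ — this doesn't immediately give boundedness of $\Lotimes M{\Lotimes BC}$. A cleaner route: show directly that $\Lotimes BC$ has finite injective dimension, using that $\Rhom MC\in\catdb(R)$ says $M$ is "$C$-something". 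Actually the right statement to chase is symmetric to~\ref{obs140825a3}: the dual of the test condition. I expect the main obstacle to be exactly this step — extracting, from the single boundedness hypothesis $\Rhom MC\in\catdb(R)$ plus $M$ being $\gkdim B$-test, that $\Lotimes BC$ is in the Bass class $\catb_{\da B}(R)$ (equivalently has finite injective dimension once we also know it is semidualizing).

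Granting that, the endgame is: $\Lotimes BC\in\cata_B(R)$... no — rather $\Lotimes BC$ is semidualizing (it is, since $C\simeq\da B$ once we're done, or directly: $\Rhom{\Lotimes BC}{\Lotimes BC}\simeq\Rhom{\Lotimes BC}C\circ(\dots)$, using $C\in\cata_B$), and $\Lotimes BC$ has finite injective dimension, so by definition~\ref{ch150423c} it is dualizing. So the logical skeleton is: (i) reduce to having $D$; (ii) from the hypotheses deduce $C\in\cata_B(R)$, hence $\Lotimes BC\in\catdb(R)$ and $\gamma^B_C$ an isomorphism; (iii) the key step — use $\Rhom MC\in\catdb(R)$ and $M$ being $\gkdim B$-test to force $\gkdim B_R(\Lotimes BC)<\infty$, i.e.\ $\Lotimes BC\in\catb_{\da B}(R)$; (iv) show $\Lotimes BC$ is semidualizing; (v) a semidualizing complex in $\catb_{\da B}(R)$, where $\da B$ has finite injective dimension "relative to $B$", has finite injective dimension, hence is dualizing. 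Steps (iv)--(v) should follow from the standard foil/composition relations among Auslander–Bass classes and the fact that $D\simeq\Lotimes{\da B}B$, with the upshot $\Lotimes BC\simeq D$; I would phrase (v) as: $C\in\cata_B(R)$ and $\gkdim B_R(\Lotimes BC)<\infty$ together say $C$ is derived $\da B$-reflexive with $\Rhom C{\da B}\simeq R$... leading to $C\simeq\da B$ and hence $\Lotimes BC\simeq\Lotimes B{\da B}\simeq D$.
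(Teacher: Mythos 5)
Your overall skeleton --- reduce condition~\eqref{mainobs2} to condition~\eqref{mainobs1} by completing, then in the presence of $D$ show $C\simeq\da B$ up to shift so that $\Lotimes BC\simeq\Lotimes B{\da B}\simeq D$ --- matches the paper's. But there is a genuine gap at exactly the step you flag as the main obstacle, and the route you sketch for it (trying to get $\Lotimes M{(\Lotimes BC)}\in\catdb(R)$, equivalently $\gkdim B_R(\Lotimes BC)<\infty$) is not the one that works: there is no evident way to convert the Ext-type hypothesis $\Rhom MC\in\catdb(R)$ into a Tor-type boundedness statement about $\Lotimes BC$, and in general these two conditions are independent of one another.

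The missing idea is to feed the test hypothesis a Matlis-dual object rather than $C$ or $\Lotimes BC$. Set $E:=E_R(k)$, let $K$ be the Koszul complex on a generating sequence for $\m$, and put $X:=\Lotimes K{\Rhom CE}$. Hom-evaluation~\cite[(4.4)]{avramov:hdouc} gives $\Lotimes M{\Rhom CE}\simeq\Rhom{\Rhom MC}E$, which is homologically bounded precisely because $\Rhom MC$ is; tensoring with $K$ preserves this, so $\Lotimes MX\in\catdb(R)$. The Koszul factor is what makes $X$ homologically \emph{finite} (its total homology is a finite-dimensional $k$-vector space, again by Hom-evaluation), so the $\gkdim B$-test hypothesis applies and yields $\gkdim B_R(X)<\infty$, i.e., $\Rhom{\Rhom KC}E\in\cata_{\da B}(R)$ by~\ref{ch150423b1}. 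Faithful injectivity of $E$ converts this to $\Rhom KC\in\catb_{\da B}(R)$, and Lemma~\ref{fact140901a}\eqref{fact140901a2} strips off $K$ to give $C\in\catb_{\da B}(R)$. Your endgame also needs a concrete input that you leave unspecified: from $\gkdim{\da B}_R(C)<\infty$ one gets $\da B\in\catb_{C}(R)$ by~\cite[(1.3)]{frankild:rbsc}, and the two memberships $C\in\catb_{\da B}(R)$ and $\da B\in\catb_{C}(R)$ force $C$ and $\da B$ to be isomorphic up to shift by~\cite[(1.4), (4.10.4)]{frankild:rbsc}, whence $\Lotimes BC\simeq\Lotimes B{\da B}\simeq D$. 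Your reduction of case~\eqref{mainobs2} to case~\eqref{mainobs1} over $\comp R$, and your observation that (once $D$ exists) $C\in\cata_B(R)$ is equivalent to $\gkdim{\da B}_R(C)<\infty$, are both correct and agree with the paper; the gap is confined to, but is essential at, your step (iii).
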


\begin{proof} 
\eqref{mainobs1}
Assume that $R$ has a dualizing complex $D$, and $\gkdim{\da B}_R(C)<\infty$. 
Set $\da C:=\Rhom CD$ which is semidualizing by~\ref{ch150423c1}.
Let $\x=x_1, \ldots, x_{n}\in\m$ be a generating sequence for $\m$, and consider the Koszul complex $K:=K^R(\x)$. Set $X:=\Lotimes K{\Rhom CE}$ where $E=E_R(k)$ is the injective hull of $k$.

We note that $X$ is in $\catdfb(R)$. Indeed, 
the complex $\Rhom KC$ is homologically finite since $C$ is. The total homology
module $\HH(\Rhom KC)$ is annihilated by $(\x)R=\m$, so it is a finite dimensional vector space over $k$. 
By Matlis duality, the total homology module of $\Rhom {\Rhom KC}E$ is also a 
finite dimensional vector space over $k$, so 
we have
\begin{equation}\label{eq140825b}
X=\Lotimes K{\Rhom CE}\simeq\Rhom {\Rhom KC}E\in\catdfb(R)
\end{equation}
by Hom-evaluation~\cite[(4.4)]{avramov:hdouc}.

The assumption $\Rhom MC \in \catdb(R)$ implies that
$$\Lotimes M{\Rhom CE}\simeq\Rhom{\Rhom MC}E\in \catdb(R)$$
again by Hom-evaluation~\cite[(4.4)]{avramov:hdouc}.
From this, we conclude that
$$\Lotimes MX=\Lotimes M{(\Lotimes K{\Rhom CE})}\simeq\Lotimes K{(\Lotimes M{\Rhom CE})}\in \catdb(R).$$
Since $M$ is a  $\gkdim B$-test complex, this implies that
$\gkdim{B}_R(X)<\infty$.
From~\ref{ch150423b1}
we conclude that 
$X\in\cata_{\da B}(R)$,
i.e., we have $\Rhom {\Rhom KC}E\in\cata_{\da B}(R)$ by~\eqref{eq140825b}.
Since $E$ is faithfully injective, argue as in the proof of~\cite[(3.2.9)]{Chr} to conclude that
$\Rhom KC\in\catb_{\da B}(R)$; see also~\cite[4.14]{sather:afc}.
By assumption, we have $C\in\catdfb(R)$,
so Lemma~\ref{fact140901a}\eqref{fact140901a2} shows that $C\in\catb_{\da B}(R)$.

By assumption, we have $\gkdim{\da B}_R(C)<\infty$,
so~\cite[(1.3)]{frankild:rbsc} implies that $\da B\in\catb_{C}(R)$.
We conclude from~\cite[(1.4) and (4.10.4)]{frankild:rbsc} that $C$ and $\da B$ are isomorphic up to a shift in $\catd(R)$.
Apply a shift to $C$ to assume that $C\simeq\da B$.
From~\cite[(4.4)]{LarsSD} we have
$D\simeq\Lotimes B{\da B}\simeq\Lotimes BC$, as desired.

\eqref{mainobs2}
Assume now that $C\in\catab(R)$.
The completion $\comp R$ has a dualizing complex $D^{\comp R}$, by~\ref{ch150423c1},
and the complexes $\Lotimes {\comp R}B$ and $\Lotimes {\comp R}C$ are semidualizing over $\comp R$, by~\ref{ch150423c2}.
We have $\Lotimes{\comp R}M \not\simeq 0$ by faithful flatness of $\comp R$, and
the complex $\Lotimes{\comp R}M$ is $\gkdim{\Lotimes{\comp R}B}$-test
by Theorem~\ref{thm140825ax}\eqref{thm140825ax1}.
Also, by faithful flatness, the condition $\Rhom MC \in \catdb(R)$
implies that
$$\Rhom[\comp R]{\Lotimes{\comp R}M}{\Lotimes{\comp R}C}\simeq\Lotimes{\comp R}{\Rhom MC} \in \catdb(\comp R)$$
by~\cite[(1.9.a)]{frankild:rrhffd}.

With $(-)^\dagger:=\Rhom[\comp R]-{D^{\comp R}}$, we have
an isomorphism $\Lotimes{\comp R}B\simeq(\Lotimes{\comp R}B)^{\dagger\dagger}$
by~\ref{ch150423a1}.
In addition, from~\cite[(5.8)]{LarsSD}, the assumption $C\in\cata_B(R)$ implies that
we have $\Lotimes{\comp R}C\in\cata_{\Lotimes{\comp R}B}(\comp R)=\cata_{(\Lotimes{\comp R}B)^{\dagger\dagger}}(\comp R)$.
We conclude from~\ref{ch150423b1} that $\gkdim{(\Lotimes{\comp R}B)^\dagger}_{\comp R}(\Lotimes{\comp R}C)<\infty$.

It follows that condition~\eqref{mainobs1} is satisfied over $\comp R$.
Thus, the $\comp R$-complex 
$$
\Lotimes[\comp R]{(\Lotimes{\comp R}B)}{(\Lotimes{\comp R}C)}
\simeq\Lotimes{(\Lotimes{\comp R}B)}{C}
\simeq\Lotimes{\comp R}{(\Lotimes BC)}
$$
is dualizing for $\comp R$.
Note that the condition $C\in\cata_B(R)$ implies by definition that
$\Lotimes BC\in\catdb(R)$.
Thus, the condition $B,C\in\catdf_{\text{b}}(R)$ implies that $\Lotimes BC\in\catdfb(R)$.
From this, the fact that $\Lotimes{\comp R}{(\Lotimes BC)}$ is dualizing for $\comp R$
implies that 
$\Lotimes BC$ is dualizing for $R$, by~\cite[(5.1)]{avramov:lgh}, as desired.
\end{proof}

We now give several consequences of Theorem~\ref{mainobs}.
Compare the next result to~\cite[(3.4)]{CDtest}.

\begin{cor} \label{mainobsx} Let $M$ be a  $\gdim$-test $R$-complex.  
Let $C$ be a semidualizing $R$-complex such that that $\Rhom MC \in \catdb(R)$. Then $C$ is dualizing for $R$.
\end{cor}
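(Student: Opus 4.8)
The plan is to deduce Corollary~\ref{mainobsx} directly from Theorem~\ref{mainobs} by specializing $B$ to the semidualizing complex $R$. First I would observe that if $M$ is a $\gdim$-test $R$-complex, then $M$ is $\gkdim{R}$-test, since $\gdim = \gkdim{R}$ by definition; see~\ref{ch150423a}. We may assume $M\not\simeq 0$, since otherwise $R$ is already Gorenstein by~\ref{obs150421a} and hence has a dualizing complex $D\simeq R$, which is of the form $\Lotimes BC$ with $B=R$ (after noting $R\in\cata_R(R)$). So assume $M\not\simeq 0$.

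Next I would verify that condition~\eqref{mainobs2} of Theorem~\ref{mainobs} holds with $B=R$: every $R$-complex $X\in\catdb(R)$ lies in the Auslander class $\cata_R(R)$, because $\Lotimes RX\simeq X$ and the natural map $\gamma^R_X\colon X\to\Rhom R{\Lotimes RX}\simeq X$ is the identity; see~\ref{ch150423b}. In particular $C\in\cata_R(R)=\catab(R)$. The remaining hypothesis of Theorem~\ref{mainobs}, namely $\Rhom MC\in\catdb(R)$, is exactly what we are assuming. Applying Theorem~\ref{mainobs}\eqref{mainobs2} with $B=R$, we conclude that $\Lotimes RC\simeq C$ is dualizing for $R$.

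There is essentially no obstacle here: the corollary is a clean specialization, and the only points needing care are the trivial bookkeeping — confirming $\gdim$-test means $\gkdim R$-test, handling the degenerate case $M\simeq 0$, and recording that $\catab(R)=\catdb(R)$ when $B=R$ so that condition~\eqref{mainobs2} is automatic. All the genuine work has already been done in the proof of Theorem~\ref{mainobs}.
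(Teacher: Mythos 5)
Your proposal is correct and follows essentially the same route as the paper: both reduce to Theorem~\ref{mainobs} with $B=R$ via condition~\eqref{mainobs2}, using the observation that $\cata_R(R)=\catdb(R)$, and both dispose of the degenerate case $M\simeq 0$ by appealing to~\ref{obs150421a}. The only cosmetic difference is that you spell out why $\cata_R(R)=\catdb(R)$, which the paper simply asserts.
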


\begin{proof} 
By~\ref{obs150421a}, we assume  that $M\not\simeq 0$.
The desired conclusion follows from  Theorem~\ref{mainobs}  with $B=R$,
once we note that $C\in\catdb(R)=\cata_R(R)$.
\end{proof}

\begin{cor} \label{mainobscorq} 
Let $M$ be a  $\gdim$-test $R$-complex such that $\Rhom MR \in \catdb(R)$. Then $R$ is Gorenstein. 
\end{cor}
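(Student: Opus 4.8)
The plan is to derive Corollary~\ref{mainobscorq} directly from Corollary~\ref{mainobsx} by specializing the semidualizing complex to $C=R$. First I would observe that $R$ is itself a semidualizing $R$-complex (see~\ref{ch150423c}), and that the hypothesis $\Rhom MR\in\catdb(R)$ is exactly the condition ``$\Rhom MC\in\catdb(R)$'' of Corollary~\ref{mainobsx} in the case $C=R$. Applying that corollary then gives that $R=\Rhom RR$ is dualizing for $R$, i.e., $R$ has finite injective dimension over itself, which is precisely the statement that $R$ is Gorenstein.

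The only point that requires a word of care is the degenerate case $M\simeq 0$: Corollary~\ref{mainobsx} as invoked should be applied to a genuine test complex, and the trivial complex $0$ is a $\gdim$-test complex only when $R$ is already Gorenstein (see~\ref{obs150421a}). So I would split off that case at the very start — if $M\simeq 0$ then $R$ is Gorenstein and we are done — and otherwise assume $M\not\simeq 0$ and feed it into Corollary~\ref{mainobsx} (or, equivalently, directly into Theorem~\ref{mainobs} with $B=C=R$, noting $C=R\in\catdb(R)=\cata_R(R)$ so hypothesis~\eqref{mainobs2} holds automatically).

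There is really no main obstacle here: this corollary is a pure specialization, and all the substantive work — the Koszul-complex and Matlis-duality manipulations, the passage to the completion, and the identification of $C$ with $\da B$ up to shift — has already been carried out in the proof of Theorem~\ref{mainobs}. The ``hard part,'' such as it is, amounts only to correctly matching the hypotheses of Corollary~\ref{mainobsx} and recalling that a semidualizing complex $D$ with $D\simeq R$ being dualizing is the definition of Gorenstein. Thus the proof I would write is essentially two sentences long.

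\begin{proof}
If $M\simeq 0$, then $R$ is Gorenstein by~\ref{obs150421a}, so assume $M\not\simeq 0$.
Since $R$ is a semidualizing $R$-complex with $\Rhom MR\in\catdb(R)$ by hypothesis, Corollary~\ref{mainobsx} (applied with $C=R$) shows that $R\simeq\Rhom RR$ is dualizing for $R$; that is, $R$ has finite injective dimension over itself, so $R$ is Gorenstein.
\end{proof}
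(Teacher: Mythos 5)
Your proof is correct and matches the paper's own argument, which likewise just applies Corollary~\ref{mainobsx} with $C=R$. Your extra handling of the case $M\simeq 0$ is harmless but redundant, since that case is already disposed of inside the proof of Corollary~\ref{mainobsx} via~\ref{obs150421a}.
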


\begin{proof}
Use $C=R$ in Corollary~\ref{mainobsx}.
\end{proof}

\begin{thm} \label{mainobscor} Let $M$ be a  $\gdim$-test $R$-module such that $\Ext iMR=0$ for $i\gg 0$, e.g., with $\gdim_R(M)<\infty$. Then $R$ is Gorenstein.
\end{thm}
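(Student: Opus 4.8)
The plan is to deduce this immediately from Corollary~\ref{mainobscorq}, whose hypothesis is that $M$ is a $\gdim$-test $R$-complex with $\Rhom MR\in\catdb(R)$. So the only task is to check that the module-theoretic hypotheses of Theorem~\ref{mainobscor} translate into that setup.

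First I would invoke~\ref{obs1}: since $M$ is a finitely generated $R$-module, $M$ is a $\gdim$-test module if and only if it is a $\gdim$-test complex in the sense of Definition~\ref{Gtest}. Thus the $\gdim$-test hypothesis carries over to the complex setting. Next I would translate the Ext-vanishing into a boundedness statement: because $M$ is a module, the complex $\Rhom MR$ has homology concentrated in nonpositive degrees, with $\HH_{-i}(\Rhom MR)\cong\Ext iMR$ for $i\geq 0$; moreover each such homology module is finitely generated since $M$ is. Hence the assumption $\Ext iMR=0$ for $i\gg 0$ says exactly that this homology vanishes for $i\gg 0$, i.e., $\Rhom MR\in\catdfb(R)\subseteq\catdb(R)$. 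Now Corollary~\ref{mainobscorq} applies verbatim and gives that $R$ is Gorenstein.

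For the parenthetical ``e.g.\ with $\gdim_R(M)<\infty$'' clause, I would note that finiteness of $\gdim_R(M)$ means by definition (see~\ref{ch150423a} with $C=R$) that $M$ is derived $R$-reflexive, so in particular $\Rhom MR\in\catdb(R)$; in fact $\Ext iMR=0$ for all $i>\gdim_R(M)$, so this case is subsumed by the previous one.

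The point to stress is that there is essentially no obstacle at this stage: all of the substance lives in Theorem~\ref{mainobs} and the passage to the completion carried out there, together with the reduction to dualizing complexes in Corollary~\ref{mainobsx}. The present statement is just the specialization $B=C=R$ of that machinery, repackaged for modules; the only mild care needed is the bookkeeping that for a module $M$ the Ext-vanishing is equivalent to $\Rhom MR$ being homologically bounded (and homologically finite), which is what lets us feed $M$ into Corollary~\ref{mainobscorq}.
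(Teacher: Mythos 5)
Your proposal is correct and follows exactly the paper's route: the paper's proof of Theorem~\ref{mainobscor} is simply ``This is immediate from Corollary~\ref{mainobscorq},'' and your translation of the module hypotheses (via~\ref{obs1} and the identification $\HH_{-i}(\Rhom MR)\cong\Ext iMR$) just makes explicit the bookkeeping the paper leaves implicit.
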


\begin{proof}
This is immediate from Corollary~\ref{mainobscorq}.
\end{proof}

We note that the hypotheses of Theorem~\ref{mainobscor} are weaker than those in Corollary~\ref{cor140825a}.
Indeed, Example~\ref{ex140816a} above exhibits a $\gdim$-test module that is not a $\pd$-test module.
Furthermore, as we noted in~\ref{obs140825a3}, there exist examples of finitely generated modules $L$
such that  $\Ext iLR=0$ for all $i\geq 1$ and $\gdim_R(L)=\infty$.

\begin{cor} \label{cor140901a} 
Let $C$ be a semidualizing $R$-complex. If $C$ is $\gdim$-test over $R$, then $C$ is dualizing for $R$.
\end{cor}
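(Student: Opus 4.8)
The plan is to deduce this immediately from Corollary~\ref{mainobsx} by taking $M=C$. First I would recall that, by definition, $C$ being a semidualizing $R$-complex means precisely that the natural morphism $R\to\Rhom CC$ is an isomorphism in $\catd(R)$; see~\ref{ch150423c}. In particular $\Rhom CC\simeq R\in\catdb(R)$, so the ``$\Rhom$-boundedness'' hypothesis of Corollary~\ref{mainobsx} is automatically satisfied when the test complex and the semidualizing complex are both taken to be $C$.

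Next, by hypothesis $C$ is $\gdim$-test over $R$, so $M:=C$ is a $\gdim$-test $R$-complex and $C$ is a semidualizing $R$-complex with $\Rhom MC=\Rhom CC\in\catdb(R)$. Applying Corollary~\ref{mainobsx} verbatim then yields that $C$ is dualizing for $R$, which is the claim. (One could equally invoke Theorem~\ref{mainobs}\eqref{mainobs2} directly, noting that $C\in\catb_C(R)$ trivially and, after a shift if necessary, $C\in\cata_C(R)$ is the relevant input; but routing through Corollary~\ref{mainobsx} is cleaner.)

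I do not anticipate any genuine obstacle here: the entire content has already been established in Theorem~\ref{mainobs} and its corollary, and the present statement is just the instantiation $M=C$. It is worth remarking, in the text surrounding the proof, that this complements~\ref{obs140825a3}: there we observed that a dualizing complex $D$ is the natural candidate for a $\gdim$-test complex (though it can fail to be one), and here we see the converse among semidualizing complexes—if a semidualizing complex $C$ happens to be $\gdim$-test, it must in fact be dualizing. In particular, taking $C=R$ recovers the statement that $R$ is Gorenstein whenever $R$ itself is $\gdim$-test, which also follows from Corollary~\ref{mainobscorq}.

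\begin{proof}
Since $C$ is semidualizing, the natural morphism $R\to\Rhom CC$ is an isomorphism, so $\Rhom CC\simeq R\in\catdb(R)$. As $C$ is $\gdim$-test over $R$ by hypothesis, Corollary~\ref{mainobsx} applied with $M=C$ shows that $C$ is dualizing for $R$.
\end{proof}
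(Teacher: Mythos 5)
Your proof is correct and is essentially identical to the paper's own argument: both apply Corollary~\ref{mainobsx} with $M=C$, using the semidualizing condition $\Rhom CC\simeq R\in\catdb(R)$ to verify the boundedness hypothesis. No issues.
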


\begin{proof} 
This follows from Corollary~\ref{mainobsx}
since $\Rhom CC\simeq R\in\catdb(R)$.
\end{proof}

\begin{disc}
\label{disc140901a}
In light of Corollary~\ref{cor140901a},
it is worth noting that there are rings with semidualizing complexes that are not dualizing and that have infinite projective dimension.
(In particular, these complexes are neither $\pd$-test nor $\gdim$-test by~\ref{obs140825a1} and Corollary~\ref{cor140901a}.)
The first examples were constructed (though not published) by Foxby.
See also~\cite[(7.8)]{LarsSD} and~\cite{sather:divisor,sather:lbnsc}.

It is also worth noting that the converse of Corollary~\ref{cor140901a} fails in general by~\ref{obs140825a3}.
\end{disc}

\begin{cor} \label{cor2z} Let $I\subseteq R$ be an integrally closed ideal such that $\depth(R/I)=0$, e.g., such that $I$ is $\fm$-primary, and let $C$ be a semidualizing $R$-complex. 
Then $C$ is dualizing for $R$ if and only if $\gcdim_R(R/I)<\infty$.
\end{cor}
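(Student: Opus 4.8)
The plan is to read off both implications from the machinery already in place, with Corollary~\ref{mainobsx} doing the heavy lifting for the nontrivial direction; note that specializing to $C=R$ will then recover Theorem~\ref{cor2}, since $R$ is a dualizing $R$-complex precisely when $R$ is Gorenstein. For the easy implication, suppose $C$ is dualizing. By~\ref{ch150423a1}, every homologically finite $R$-complex is derived $C$-reflexive, and since $R/I\in\catdfb(R)$ this immediately gives $\gcdim_R(R/I)<\infty$. No hypothesis on $I$ is needed for this direction.

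For the converse, suppose $\gcdim_R(R/I)<\infty$; the goal is to conclude that $C$ is dualizing. First I would invoke Lemma~\ref{lem150424a}: since $I$ is integrally closed with $\depth(R/I)=0$ (which holds in particular when $I$ is $\fm$-primary), the module $R/I$ is a $\pd$-test module over $R$, hence a $\pd$-test complex; then~\ref{obs140825a1} upgrades this to the statement that $R/I$ is a $\gdim$-test $R$-complex. Next, unwinding the definition in~\ref{ch150423a}, the hypothesis $\gcdim_R(R/I)<\infty$ says exactly that $R/I$ is derived $C$-reflexive, so in particular $\Rhom{R/I}{C}\in\catdb(R)$. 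Now apply Corollary~\ref{mainobsx} with $M=R/I$: as $M$ is $\gdim$-test and $\Rhom MC\in\catdb(R)$, we deduce that $C$ is dualizing for $R$, as desired.

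I do not anticipate a genuine obstacle: all of the substance has already been extracted into Lemma~\ref{lem150424a} (built on Burch's theorem and the ``determinantal trick'') and into Corollary~\ref{mainobsx} (the dualizing-complex detection criterion, ultimately Theorem~\ref{mainobs} via completion). The only points requiring care are checking that the pair of hypotheses ``$I$ integrally closed'' and ``$\depth(R/I)=0$'' is precisely what licenses the $\pd$-test conclusion of Lemma~\ref{lem150424a}, and that ``$\pd$-test'' genuinely propagates to ``$\gdim$-test'' via~\ref{obs140825a1}; both of these are already recorded earlier in the paper, so the corollary reduces to a one-line assembly of existing results.
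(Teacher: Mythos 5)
Your proposal is correct and follows essentially the same route as the paper: Lemma~\ref{lem150424a} makes $R/I$ a $\pd$-test (hence $\gdim$-test) complex, \ref{ch150423a1} gives the easy direction, and Corollary~\ref{mainobsx} applied to $M=R/I$ (using that $\gcdim_R(R/I)<\infty$ forces $\Rhom{R/I}{C}\in\catdb(R)$) gives the converse. The paper's proof is exactly this one-line assembly.
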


\begin{proof} Note that $I$ is a $\pd$-test by Lemma~\ref{lem150424a}, and apply~\ref{ch150423a1} and Corollary~\ref{mainobsx}.
\end{proof}

Recall that the next result has been initially obtained by Goto and Hayasaka~\cite[(1.1)]{GH} under some extra conditions; see also \cite[(2.2)]{GH2}.

\begin{thm} \label{cor2} 
Let $I\subseteq R$ be an integrally closed ideal with $\depth(R/I)=0$, e.g., such that $I$ is $\fm$-primary. Then $R$ is Gorenstein if and only if $\gdim_R(R/I)<\infty$.
\end{thm}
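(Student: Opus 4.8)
The plan is to deduce this immediately from the machinery already assembled, since the substantive work has been carried out in Lemma~\ref{lem150424a} and Theorem~\ref{mainobs}. The forward implication is the easy one: if $R$ is Gorenstein, then by~\ref{ch150423a1} every complex in $\catdfb(R)$ has finite G-dimension, so in particular $\gdim_R(R/I)<\infty$; no hypothesis on $I$ is needed for this direction.

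For the converse, I would first invoke Lemma~\ref{lem150424a} to conclude that $R/I$ is a $\pd$-test module over $R$; this is precisely the point where the integral closedness of $I$ and the condition $\depth(R/I)=0$ enter, via Burch's theorem. Since every $\pd$-test module is $\gdim$-test by~\ref{obs140825a1}, and since the hypothesis $\gdim_R(R/I)<\infty$ forces $\Ext i{R/I}R=0$ for $i\gg 0$, the module $M=R/I$ satisfies the hypotheses of Theorem~\ref{mainobscor}, which yields that $R$ is Gorenstein. Alternatively, one may cite Corollary~\ref{cor140825a} directly (a $\pd$-test module of finite G-dimension forces Gorensteinness), or specialize Corollary~\ref{cor2z} to the case $C=R$ and use the equivalence ``$R$ is dualizing for itself if and only if $R$ is Gorenstein'' from~\ref{ch150423a1}.

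There is essentially no obstacle left at this stage: the genuine difficulties were isolated earlier, namely (a) the Burch-style determinantal-trick argument showing $R/I$ is $\pd$-test in Lemma~\ref{lem150424a}, and (b) the reduction, inside the proof of Theorem~\ref{mainobs}, of the dualizing property of $\Lotimes BC$ to the test condition, using Koszul complexes, Hom-evaluation, Matlis duality, and the Bass-class characterization of finite G-dimension. Thus the proof of Theorem~\ref{cor2} amounts to a short citation chain; the only routine point worth recording is that the ``e.g.'' clause is legitimate, i.e., that an $\fm$-primary ideal $I$ does have $\depth(R/I)=0$, which is clear since $R/I$ is then a nonzero module of finite length.
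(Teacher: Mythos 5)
Your proposal is correct and matches the paper's proof, which likewise reduces the theorem to Corollary~\ref{cor2z} with $C=R$ or, equivalently, to Theorem~\ref{mainobscor} combined with Lemma~\ref{lem150424a}. The only additions you make (the trivial forward direction and the remark that $\fm$-primary implies $\depth(R/I)=0$) are routine and consistent with the paper.
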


\begin{proof} Apply Corollary~\ref{cor2z} with $C=R$,
or use Theorem~\ref{mainobscor} with Lemma~\ref{lem150424a}.
\end{proof}

We finish this section by giving two examples that show the integrally closed and depth hypotheses of Theorem \ref{cor2} are 
necessary:

\begin{ex} \label{ex-cor2-1} Let $k$ be a field and let $R=k[\![x,y,z]\!]/(x^2,y^2,z^2,yz)$, as in Example \ref{ex140816a}. Then, since $R$ is Artinian, each proper ideal of $R$ is $\fm$-primary but is not integrally closed; see \cite[(1.1.3)(3)]{CS}. In particular the principal ideal $I$ of $R$ generated by $x$ is $\fm$-primary but not integrally closed. Note that $R$ is not Gorenstein. Also, we have $\gdim(I)=0$
because the fact that $R$ is of the form $A[x]/(x^2)$ implies that $I$ has a complete resolution $\cdots\xra xR\xra xR\xra x\cdots$.
\end{ex}

\begin{ex} \label{ex-cor2-2} Let $k$ be a field, set $S:=k[\![x_1, x_2, x_3, y_1, y_2, y_3]\!]$, and let $I$ be the ideal of $S$ generated by $x_1y_2-x_2y_1$, $x_1y_3-x_3y_1$, and $x_2y_3-y_2x_3$. Set $R :=S/I$. Then $R$ is a four-dimensional normal Cohen-Macaulay domain that is not Gorenstein; see \cite[Theorem (a)]{Goto}. Let $0\neq f\in \fm$. Then the ideal $fR$ of $R$ generated by $f$ is integrally closed; see \cite[(1.5.2)]{CS}. Furthermore  $\pd_R(R/fR)=\gdim_R(R/fR)=1$ and $\depth(R/fR)=3$.
\end{ex}

\section*{Acknowledgments}
Parts of this work were completed when Celikbas visited  North Dakota State University in November 2013, and when Sather-Wagstaff visited the University of Connecticut in April 2015.  We are grateful for the kind hospitality and the generous support of the NDSU and UConn Mathematics Departments. We are also grateful to Jerzy Weyman for supporting Sather-Wagstaff's visit, and to Irena Swanson and Ryo Takahashi for helpful feedback on this work. We thank Naoki Taniguchi and Shiro Goto for pointing out Example \ref{ex-cor2-2} to us. We also thank the referee for his/her valuable corrections and suggestions on the manuscript.

%\bibliography{a}
\bibliographystyle{amsplain}
\providecommand{\bysame}{\leavevmode\hbox to3em{\hrulefill}\thinspace}
\providecommand{\MR}{\relax\ifhmode\unskip\space\fi MR }
% \MRhref is called by the amsart/book/proc definition of \MR.
\providecommand{\MRhref}[2]{%
  \href{http://www.ams.org/mathscinet-getitem?mr=#1}{#2}
}
\providecommand{\href}[2]{#2}

\end{document}